\DeclareSymbolFontAlphabet{\mathbbm}{bbold}
\DeclareSymbolFontAlphabet{\mathbb}{AMSb}%
\definecolor{imperialBlue}{RGB}{0, 62, 116}
\definecolor{imperialBrick}{RGB}{165,25,0}
\definecolor{imperialProcess}{RGB}{0,133,202}
\definecolor{imperialGreen}{RGB}{2,137,59}
\definecolor{imperialRed}{RGB}{221,37,1}
\definecolor{imperialOrange}{RGB}{210,64,0}
\definecolor{imperialBlue2}{RGB}{0,110,175}
\definecolor{imperialTangerine}{RGB}{236,115,0}
\definecolor{imperialPurple}{RGB}{101,48,152}
\definecolor{imperialLime}{RGB}{196,214,0}
\definecolor{imperialKermit}{RGB}{102,164,10}
\newtheorem{theorem}{Theorem}[section]
\newtheorem{corollary}[theorem]{Corollary}
\newtheorem{lemma}[theorem]{Lemma}
\newtheorem{proposition}[theorem]{Proposition}
\newtheorem{remark}[theorem]{Remark}
\theoremstyle{definition}
\newtheorem{definition}[theorem]{Definition}
\newtheorem{assumption}[theorem]{Assumption}
\newmdtheoremenv[
hidealllines=true,
leftline=true,
innertopmargin=0pt,
innerbottommargin=0pt,
linewidth=4pt,
linecolor=gray!40,
innerrightmargin=0pt,
innertopmargin=-6pt,
]{example}{Example}[section]
\setlist[enumerate]{itemsep=0mm}
\newcommand{\N}{\mathbb{N}}
\newcommand{\R}{\mathbb{R}}
\newcommand{\W}{\mathbb{W}}
\newcommand{\X}{\mathbb{X}}
\newcommand{\Y}{\mathbb{Y}}
\newcommand{\Z}{\mathbb{Z}}
\newcommand{\CC}{\mathcal{C}}
\newcommand{\PP}{\mathcal{P}}
\renewcommand{\SS}{\mathcal{S}}
\newcommand{\XX}{\mathcal{X}}
\DeclarePairedDelimiterX{\normop}[1]{\lVert}{\rVert_{\mathrm{op}}}{#1}
\DeclarePairedDelimiterX{\normopp}[1]{\lVert}{\rVert_{\mathrm{op}}^p}{#1}
\DeclarePairedDelimiterX{\normhs}[1]{\lVert}{\rVert_{\mathrm{HS}}}{#1}
\DeclarePairedDelimiter\floor{\lfloor}{\rfloor}
\def\upintkern@{\mkern-7mu\mathchoice{\mkern-3.5mu}{}{}{}}
\def\upintdots@{\mathchoice{\mkern-4mu\@cdots\mkern-4mu}%
	{{\cdotp}\mkern1.5mu{\cdotp}\mkern1.5mu{\cdotp}}%
	{{\cdotp}\mkern1mu{\cdotp}\mkern1mu{\cdotp}}%
	{{\cdotp}\mkern1mu{\cdotp}\mkern1mu{\cdotp}}}
\newcommand{\UpMultiIntegral}[1]{%
	\edef\ints@c{\noexpand\upintop
		\ifnum#1=\z@\noexpand\upintdots@\else\noexpand\upintkern@\fi
		\ifnum#1>\tw@\noexpand\upintop\noexpand\upintkern@\fi
		\ifnum#1>\thr@@\noexpand\upintop\noexpand\upintkern@\fi
		\noexpand\upintop
		\noexpand\ilimits@
	}%
	\futurelet\@let@token\ints@a
}
\DeclareFontFamily{OMX}{mdbch}{}
\DeclareFontShape{OMX}{mdbch}{m}{n}{ <->s * [0.8]  mdbchr7v }{}
\DeclareFontShape{OMX}{mdbch}{b}{n}{ <->s * [0.8]  mdbchb7v }{}
\DeclareFontShape{OMX}{mdbch}{bx}{n}{<->ssub * mdbch/b/n}{}
\DeclareSymbolFont{uplargesymbols}{OMX}{mdbch}{m}{n}
\DeclareMathSymbol{\upintop}{\mathop}{uplargesymbols}{82}
\DeclareMathSymbol{\upointop}{\mathop}{uplargesymbols}{"48}
\DeclareFontFamily{MDB}{mdbch}{}
\DeclareFontShape{MDB}{mdbch}{m}{n}{ <->s * [0.8]  mdbchrmb }{}
\DeclareFontShape{MDB}{mdbch}{b}{n}{ <->s * [0.8]  mdbchbmb }{}
\DeclareFontShape{MDB}{mdbch}{bx}{n}{<->ssub * mdbch/b/n}{}
\DeclareSymbolFont{mathdesignB}{MDB}{mdbch}{m}{n}%
\DeclareMathSymbol{\upintclockwise}{\mathop}{mathdesignB}{128}
\DeclareMathSymbol{\upointclockwise}{\mathop}{mathdesignB}{130}
\DeclareMathSymbol{\upointctrclockwise}{\mathop}{mathdesignB}{132}
\DeclareMathSymbol{\upoiint}{\mathop}{mathdesignB}{134}
\DeclareMathSymbol{\upoiiint}{\mathop}{mathdesignB}{136}
\newcommand{\upint}{\DOTSI\upintop\ilimits@}
\newcommand{\upoint}{\DOTSI\upointop\ilimits@}
\renewcommand{\int}{\upint}
\newcommand{\dpvar}{d_{\text{p-var}}}
\newcommand{\WGp}{WG\Omega_p(V)}
\newcommand{\WG}[1]{WG\Omega_{#1}(V)}
\newcommand{\Gp}{G\Omega_p(V)}
\newcommand{\CGp}{C^{\text{p-var}}\big([0,T], G^{(n)}\big)}
\title{\textsc{Topologies on unparameterised rough path space}}
\author{Thomas Cass\textsuperscript{1,2}, William F. Turner\textsuperscript{1}}
\affil{\small\textsuperscript{1} Department of Mathematics, Imperial College London\\ \textsuperscript{2} Institute for Advanced Study, United States of America}
\date{\today}
\begin{document}

\maketitle
\begin{abstract}
    The signature of a $p$-weakly geometric rough path summarises a path up to a generalised notion of reparameterisation. The quotient space of equivalence classes on which the signature is constant yields unparameterised path space. The study of topologies on unparameterised path space, initiated in \cite{CT_topologies} for paths of bounded variation, has practical bearing on the use of signature based methods in a variety applications. This note extends the majority of results from \cite{CT_topologies} to unparameterised weakly geometric rough path space. We study three classes of topologies: metrisable topologies for which the quotient map is continuous; the quotient topology derived from the underlying path space; and an explicit metric between the tree-reduced representatives of each equivalence class. We prove that topologies of the first type (under an additional assumption) are separable and Lusin, but not locally compact or completely metrisable. The quotient topology is Hausdorff but not metrisable, while the metric generating the third topology is not complete and its topology is not locally compact. We also show that the third topology is Polish when $p=1$.
\end{abstract}

\section{Introduction}
The signature of a $p$-weakly-geometric rough path is a group-like element of the extended tensor algebra that faithfully captures the effect of the path on non-linear control systems. The celebrated $2010$ article of Hambly and Lyons \cite{HL} showed that the signature of a continuous path of finite $1$-variation is unique up to tree-like equivalence; a generalised notion of reparameterisation. This result has since been extended to the general case of $p>1$ in \cite{BGLY}. Combined, these papers provide a well-defined notion of unparameterised path space, that is the quotient of $p$-weakly geometric rough paths by tree-like equivalence. The space of unparameterised paths then becomes a group with multiplication defined by path concatenation and inverses given by path reversal. 

As initially remarked in \cite{HL}, there are several natural choices of topology that one might impose on unparameterised path space, and this choice has a bearing on practical uses of signature methods. Indeed, the universal approximation theorem of \cite{LLN} loosely states that any continuous function on a compact subset of unparameterised path space may be uniformly approximated by linear functionals on the signature. Of course, the abundance of compact sets and continuous functions is entirely determined by the imposed topology. Moreover, much of the recent success that the signature has found in application is through signature-based kernel methods, see \cite{KO,SigPDE,WSK,SKlimit,CT_free} for various choices of signature-based kernels. Several tools used to study universality and characteristicness of kernels relies on the topological assumption that the underlying space is locally compact; a property that is especially useful in the study of translation invariant kernels on topological groups via Fourier transforms.  Finally, if one intends to do probability theory on these spaces then a Polish topology is desirable, as this is the setting of many standard tools such  Prokhorov's Theorem and Ulam's Thoerem.

A detailed study of the topological properties of three natural choices was conducted for $p=1$ in \cite{CT_topologies}. The aim of this note is to extend the results of \cite{CT_topologies} to $p>1$. We study three classes of topologies on unparameterised path space
\begin{enumerate}[label=\arabic*)]
    \item Topologies induced by metrics on (suitable subsets) of the extended tensor algebra via the injectivity of the signature map on unparameterised path space;
    \item The quotient topology inherited from the $p$-variation topology on the underlying path space;
    \item A metric topology defined as the $p$-variation distance between certain tree-reduced representatives of each equivalence class.
\end{enumerate}
Our main results are summarised as follows.
\begin{enumerate}[label=\arabic*)]
    \item The topologies are strictly order with the induced topologies being the weakest, and the metric topology the strongest;
    \item All three topologies are Hausdorff;
    \item Under additional assumptions, the induced topologies are separable, but not completely metrisable or locally compact, but are $\sigma$-compact and Lusin spaces;
    \item The quotient topology is not metrisable;
    \item The metric generating the metric topology is not complete.
\end{enumerate}
We also resolve and open question of \cite{CT_topologies} and prove that the metric topology is completely metrisable, and hence Polish, when $p=1$. The proof of this fact relies on a characterisation of tree-reduced paths as the unique minimisers of length in each equivalence class. This characterisation fails to hold when $p>1$, and so the complete metrisability in this setting remains an open question. Finally, we introduce the totality of unparameterised rough path space as the union of all unparameterised paths. We show that any (reasonable) metrisable topology on this space inherits the same properties as the induced topologies for a fixed $p$.

We comment here on two open question that we have been unable to resolve. Firstly, are the topologies separable of the topologies when $p>1$? This is a more subtle question than the case $p=1$. When $p=1$, it is always possible to reparameterise a continuous path of bounded variation to be Lipschitz, which forms a separable subset in the $1$-variation topology). By contrast, the analogous statement for $p>1$ fails to hold. Secondly, is the quotient topology completely regular? An affirmative answer to this question would allow for its use in study of expected signatures in the setting of \cite{JMLR-CO}.

\subsection{Outline}
In \cref{sec: rough_paths} we introduced the prerequisite information rough path theory. This includes the extended tensor algebra and group-like elements; the notion of $p$-variation; and (weakly)-geometric rough paths and their signature. We then recall several standard properties of (weakly)-geometric rough paths that are central to our analysis. Finally we define piecewise linear paths and a special subset thereof, namely axis paths. We introduce unparameterised path space in \cref{sec: unparameterised_paths} together with the topologies that we consider. The properties of these topologies is discussed in \cref{sec: results}, while the totality of unparameterised paths is considered in \cref{sec: totality}.




\section{Rough Path Spaces and Signatures}\label{sec: rough_paths}
\subsection{Weakly Geometric Rough Paths}
We follow \cite{BGLY} for the relevant definitions from rough path theory. Let $V$ be a finite-dimensional vector space with $\text{dim}(V)\geq 2$, which equipped with inner product $\langle\cdot,\cdot\rangle$ and corresponding norm $\norm{\cdot}$. Let the tensor powers of $V$, $\big(V^{\otimes n}, n\geq 1\big)$ be equipped with a family of admissible norms $\norm{\cdot}_{V^{\otimes n}}$ satisfying:
\begin{enumerate}[label = \arabic*)]
    \item For all $m,n\in\N$ and $u\in V^{\otimes m}$, $v\in V^{\otimes n}$ it holds that
    \[
    \norm{u\otimes v}_{V^{\otimes (m+n)}}\leq \norm{u}_{V^{\otimes m}}\norm{v}_{V^{\otimes n}}.
    \]
    \item For any permutation $\sigma \in \Sigma_n$ and $u_1,\dots,u_n\in V$
    \[
    \norm{u_1\otimes \dots \otimes u_n}_{V^{\otimes n}}=\norm{u_{\sigma(1)}\otimes\dots\otimes u_{\sigma(n)}}_{V^{\otimes n}}.
    \]
\end{enumerate}
Let $T((V))$ be the extended tensor algebra over $V$, that is
\[
T((V))\coloneqq \prod_{n=0}^\infty V^{\otimes n},
\]
and write $G^{(\star)}$ for the subset of group-like elements. Let $\mathbf{1}$ be the unit element of $T((V))$. The tensor product on these spaces is the standard extension of the tensor product on the projections
\begin{align*}
    \pi^{(n)}:&T((V))\to T^{(n)}(V)\coloneqq \prod_{i=0}^n V^{\otimes i};\\
    \pi_n:&T((V))\to V^{\otimes n}.
\end{align*}
Recall that the projection of $G^{(\star)}$, under $\pi^{(n)}$, recovers $G^{(n)}\coloneqq\pi^{(n)}\big(G^{(\star)}\big)$, the step-$n$ free nilpotent Lie group. Equip $G^{(n)}$ with Carnot-Caratheodory metric $d$, see \cite[Chapter 7]{FV} for its definition and properties.

\begin{definition}
    Let $G^{(\star)}_{\text{p.r.c}}$ denote the the $u\in G^{(\star)}$ for which
    \[
    \max_{i\in\N}\norm{\pi_i(u)}^{\frac{1}{i}}<\infty.
    \]

\end{definition}
From now on, we will omit ``$\otimes$'' and simply write $uv$ for $u\otimes v$.
\begin{definition}
    Let $(E,\rho)$ be a metric space, then we say that $\gamma:[0,T]\to E$ has finite $p$-variation if 
    \begin{equation}
        \norm{\gamma}_{\text{p-var};[0,T]}\coloneqq\sup_{\PP}\left(\sum_{t_i\in\PP}\rho\big(\gamma_{t_i},\gamma_{t_{i+1}})^p\right)^{\frac{1}{p}}<\infty.
    \end{equation}
\end{definition}
Weakly-geometric rough paths will take their values in the step-$n$ free nilpotent Lie group.
\begin{definition}[Weakly geometric rough paths]
    We let $C^{\text{p-var}}\big([0,T], G^{(n)}\big)$ be the space of continuous paths finite $p$-variation such that $\gamma_0$ is the unit element of $G^{(n)}$. The space of weakly geometric rough paths $WG\Omega_p(V)$ is defined to be $C^{\text{p-var}}([0,T], G^{(\floor{p})})$
\end{definition}
When $n=1$, we may trivially identify $G^{(1)}$ with $V$, so that $C^{\text{p-var}}\big([0,T], G^{(n)}\big)\cong C^{\text{p-var}}\big([0,T], V\big)$. We will make frequent use of this fact, especially in the case $p=1$, and will not distinguish between the two spaces. We note here an important consequence of Carnot-Caratheodory metric.
\begin{proposition}\label{prop: p_var_incr_n}
    Let $\X^{\leq n}\in\CGp$, then the $p$-variation norm of $\X^{\leq n}$ in $\CGp$ is greater than or equal to the $p$-variation norm of the projection $\X^{\leq m}\in C^{\text{p-var}}\big([0,T],G^{(m)}\big)$ for all $m\leq n$.
\end{proposition}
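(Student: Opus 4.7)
The plan is to reduce the statement to the following pointwise fact about the Carnot--Carath\'eodory metric: for $m \le n$, the canonical projection $\pi^{(m,n)}\colon G^{(n)} \to G^{(m)}$ is $1$-Lipschitz, i.e.\
\[
d_{G^{(m)}}\bigl(\pi^{(m)}(u),\pi^{(m)}(v)\bigr) \le d_{G^{(n)}}(u,v) \qquad \text{for all } u,v\in G^{(n)}.
\]
Once this is in hand, the inequality on $p$-variation norms is immediate: for any partition $\mathcal{P}=\{t_i\}$ of $[0,T]$ one has
\[
\sum_{t_i\in\mathcal{P}} d_{G^{(m)}}\bigl(\X^{\le m}_{t_i},\X^{\le m}_{t_{i+1}}\bigr)^p \le \sum_{t_i\in\mathcal{P}} d_{G^{(n)}}\bigl(\X^{\le n}_{t_i},\X^{\le n}_{t_{i+1}}\bigr)^p,
\]
and taking the supremum over $\mathcal{P}$ followed by the $p$-th root yields $\|\X^{\le m}\|_{p\text{-var};[0,T]} \le \|\X^{\le n}\|_{p\text{-var};[0,T]}$.

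The $1$-Lipschitz property of $\pi^{(m,n)}$ follows directly from the definition of the Carnot--Carath\'eodory distance as an infimum of lengths of horizontal curves. Recall that $d_{G^{(n)}}(u,v)$ is the infimum of $\|\gamma\|_{1\text{-var}}$ over continuous paths $\gamma\colon[0,1]\to V$ of finite $1$-variation whose step-$n$ lift $S_n(\gamma)$ satisfies $S_n(\gamma)_0 = u$ and $S_n(\gamma)_1 = v$. Given any such $\gamma$ realising a near-infimum in $G^{(n)}$, the same path $\gamma$ is an admissible horizontal curve in $G^{(m)}$ connecting $\pi^{(m)}(u)$ to $\pi^{(m)}(v)$, because truncation of the signature commutes with step-$n$ lifting: $\pi^{(m)}\circ S_n = S_m$. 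Therefore
\[
d_{G^{(m)}}\bigl(\pi^{(m)}(u),\pi^{(m)}(v)\bigr) \le \|\gamma\|_{1\text{-var}},
\]
and infimising over $\gamma$ delivers the desired inequality.

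The only subtlety — the ``main obstacle'' in what is otherwise a short argument — is the compatibility of truncation with the step-$n$ lift, which one should either quote from \cite[Chapter 7]{FV} or verify from the universal property of the free nilpotent Lie group: the projection $T^{(n)}(V) \twoheadrightarrow T^{(m)}(V)$ is a graded Lie algebra/group homomorphism sending the horizontal generators to themselves, so it maps horizontal curves to horizontal curves preserving their developing path in $V$ and hence their $1$-variation. With this in place the proof is complete.
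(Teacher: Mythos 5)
Your proof is correct and is essentially the argument the paper intends: the proposition is stated without proof as ``an important consequence of the Carnot--Carath\'eodory metric,'' and the pointwise $1$-Lipschitz property of the projection $G^{(n)}\to G^{(m)}$ (obtained from the length-infimum definition of the metric together with $\pi^{(m)}\circ S_n = S_m$, cf.\ \cite[Chapter 7]{FV}), followed by summation over partitions, is exactly the standard route. The only cosmetic remark is that the Carnot--Carath\'eodory distance is usually phrased via left-invariance as $d(u,v)=\|u^{-1}v\|$ with lifts started at the identity, but this changes nothing in your argument.
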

\begin{definition}[$p$-variation distance]
    For $\X^{\leq n},\Y^{\leq n}\in\CGp$ we define the $p$-variation distance by
    \begin{equation}\label{eq: pvar_metric}
    d_{\text{p-var}}^{(n)}\big(\X^{\leq n},\Y^{\leq n}\big) = \max_{1\leq i\leq n} \sup_\PP\left(\sum_{t_j\in\PP} \norm{\pi_i\Big(\big(\X_{t_j}^{\leq n}\big)^{-1}\big(\X_{t_{j+1}}^{\leq n}\big) - \big(\Y_{t_j}^{\leq n}\big)^{-1}\big(\Y_{t_{j+1}}^{\leq n}\big)\Big)}^{\frac{p}{i}}\right)^{\frac{i}{p}},
\end{equation}
and equip $\CGp$ with the topology induced by $d_{\text{p-var}}^{(n)}$. 
\end{definition}
For two weakly geometric rough paths $\X^{\leq \floor{p}},\Y^{\leq \floor{p}}\in\WGp$, we will omit the dependence on $n$ and simply write $\dpvar\big(\X^{\leq \floor{p}},\Y^{\leq \floor{p}}\big)\coloneqq d_{\text{p-var}}^{(\floor{p})}\big(\X^{\leq \floor{p}},\Y^{\leq \floor{p}}\big)$. Additionally, when the level to which a path is defined is clear from the context, we will write $\X\equiv \X^{\leq \floor{p}}$. The following theorem of Lyons' informally states that for continuous a path of finite $p$-variation, if the first $\floor{p}$ iterated integrals are provided as input (subject to certain algebraic and analytic constraints), then the remaining stack of iterated integrals is uniquely determined. This result may be found in \cite[Theorem 9.5]{FV} for finite-dimensional $V$ and \cite[Corollary 3.9]{CDLL_integration} for infinite dimensional $V$, see also \cite[Theorem 2.2.1]{Lyons1998}.
\begin{theorem}[Lyons' Extension Theorem]
    Let $\X^{\leq \floor{p}}\in WG\Omega_p(V)$, then there exists a unique path $\SS_{0,\cdot}\big(\X^{\leq \floor{p}}\big):[0,T]\to G^{(\star)}_{\text{p.r.c}}$ such that 
    \begin{enumerate}[label=\arabic*)]
        \item $\SS_{0,0}\big(\X^{\leq \floor{p}}\big)=\mathbf{1}$;
        \item $\pi^{(n)}\Big(\SS_{0,t}\big(\X^{\leq \floor{p}}\big)\Big)$ defines a continuous map from $\WGp$ to $\CGp$;
        \item $\pi^{(\floor{p})}\Big(\SS_{0,t}\big(\X^{\leq \floor{p}}\big)\Big)=\X^{\leq \floor{p}}_t$.
    \end{enumerate}
    We call $\SS\big(\X^{\leq \floor{p}}\big)\coloneqq\SS_{0,T}\big(\X^{\leq \floor{p}}\big)$ the signature of $\X^{\leq \floor{p}}$.
\end{theorem}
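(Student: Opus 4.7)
The plan is to follow the classical route via almost-multiplicative functionals, as in Lyons' original 1998 paper. The base input is the multiplicative functional $\mathbf{M}_{s,t} := \bigl(\X^{\leq \floor{p}}_s\bigr)^{-1}\X^{\leq \floor{p}}_t$ taking values in $G^{(\floor{p})}$, which by hypothesis has finite $p$-variation and satisfies Chen's identity $\mathbf{M}_{s,u}\mathbf{M}_{u,t} = \mathbf{M}_{s,t}$. Set $\omega(s,t) := \|\X^{\leq \floor{p}}\|_{\text{p-var};[s,t]}^p$, which is a control (superadditive, continuous, vanishing on the diagonal). The goal is to extend $\mathbf{M}$ uniquely to a multiplicative functional with values in $T((V))$ whose level-$n$ component has a $p/n$-variation estimate of the form $\|\pi_n(\mathbf{M}_{s,t})\| \lesssim \omega(s,t)^{n/p}/\beta(n/p)$, where $\beta$ provides the factorial decay necessary for convergence in $T((V))$.

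First, I would argue inductively, extending one level at a time. Suppose a multiplicative extension $\mathbf{M}^{(n)}$ taking values in $T^{(n)}(V)$ has been constructed, with the appropriate level-wise $p$-variation bounds. For a partition $D = \{s = t_0 < \cdots < t_k = t\}$ of $[s,t]$, define the candidate at level $n+1$ by
\begin{equation*}
\mathbf{M}^{D}_{s,t} := \mathbf{M}^{(n)}_{t_0,t_1} \otimes \cdots \otimes \mathbf{M}^{(n)}_{t_{k-1},t_k},
\end{equation*}
projected to $T^{(n+1)}(V)$, where the level-$(n+1)$ component is determined by the tensor products of lower levels. Chen's identity plus the inductive bound shows $\mathbf{M}^{(n)}$ is almost-multiplicative at level $n+1$ with defect controlled by $\omega(s,t)^{(n+1)/p}$, and since $(n+1)/p > 1$ for all $n \geq \floor{p}$, one may invoke Lyons' extension lemma (the sewing-type statement that the partition refinement limit exists and defines a genuine multiplicative functional) to obtain $\mathbf{M}^{(n+1)}$.

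Second, the key analytic input for carrying the induction indefinitely and obtaining summability in $T((V))$ is the neo-classical inequality, which supplies the factorial constants $\beta(n/p) \sim (n/p)!$ (up to a $p$-dependent multiplicative correction) in the level-$n$ bound; this simultaneously ensures $\SS_{0,t}(\X^{\leq \floor{p}})$ takes values in $G^{(\star)}_{\text{p.r.c}}$ as required. Property 3 is built into the construction, and property 1 follows by evaluating on the trivial partition. For continuity (property 2), the key point is that the estimates above are quantitative and stable: running the same inductive construction simultaneously for $\X$ and $\Y$ and tracking the difference at each stage yields bounds of the form $d^{(n)}_{\text{p-var}}(\SS^{(n)}(\X),\SS^{(n)}(\Y)) \leq C_n(\|\X\|_{\text{p-var}},\|\Y\|_{\text{p-var}})\, d_{\text{p-var}}(\X,\Y)$ on $p$-variation balls, which delivers continuity into $\CGp$ for every $n$. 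Uniqueness reduces to the observation that any two extensions agreeing at level $\floor{p}$ and satisfying Chen's identity plus the requisite $p/n$-variation estimates must coincide at every higher level: their difference at level $n+1$ is multiplicative with vanishing approximation error, and the sewing argument forces it to be zero.

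The main obstacle I expect is keeping the factorial/neo-classical bookkeeping tight enough through the induction so that the resulting signature lies in $G^{(\star)}_{\text{p.r.c}}$; sloppier control would only give values in $T((V))$ without the $\max_i \|\pi_i\|^{1/i} < \infty$ condition, which is precisely the subtlety that distinguishes the rough-path signature from a purely algebraic object. Everything else, including continuity of the individual projections $\pi^{(n)}\SS$ into $\CGp$, is a corollary of the same quantitative estimates used to prove existence.
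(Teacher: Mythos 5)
The paper does not prove this theorem; it is quoted as a known result with references to \cite[Theorem 9.5]{FV}, \cite[Corollary 3.9]{CDLL_integration} and \cite[Theorem 2.2.1]{Lyons1998}, and your sketch is precisely the argument carried out in those sources: induction on the level via almost-multiplicative functionals and the sewing/extension lemma (using that $(n+1)/p>1$ once $n\geq\floor{p}$), the neo-classical inequality for the factorial decay that places the signature in $G^{(\star)}_{\text{p.r.c}}$, local Lipschitz estimates on $p$-variation balls for continuity of $\pi^{(n)}\circ\SS$ into $\CGp$, and uniqueness from the vanishing of the additive level-$(n+1)$ difference of two extensions. The one point your outline leaves implicit is that the extension actually takes values in the group-like elements $G^{(\star)}$ (equivalently that each level-$n$ projection lies in $G^{(n)}$), which is part of the cited statement and requires a short separate verification beyond the norm estimates; otherwise the proposal is a faithful outline of the standard proof the paper relies on.
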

We note that the Lyons' lift defines a canonical mapping $\WGp\hookrightarrow\CGp$ for $n\geq\floor{p}$ given by
\[
\X^{\leq \floor{p}}\mapsto \X^{\leq n}\coloneqq \pi^{(n)}\Big(\SS_{0,\cdot}\big(\X^{\leq\floor{p}}\big)\Big).
\]
Additionally, we will write $\X^{<\infty}\coloneqq \SS_{0,T}\big(\X^{\leq \floor{p}}\big)$. We denote by $\mathscr{S}_p$ the image of $\WGp$ in $G^{(\star)}_{\text{p.r.c}}$ under the map $\SS$ and write
\[
\mathscr{S}\coloneqq\bigcup_{p\geq 1} \mathscr{S}_p\subseteq G^{(\star)}_{\text{p.r.c.}},
\]
for the space of all signatures. 
\begin{remark}
    For $p\in [1,2)$, the signature of $\gamma\in WG\Omega_p(V)$ may be realised as the sequence of $n$-fold iterated Young integrals given by
\begin{align*}
    \SS_{0,t}(\gamma)^n&\coloneqq \int\limits_{0\leq t_1\leq\dots\leq t_n\leq t}\dif\gamma_{t_1}\dots\dif\gamma_{t_n}\in V^{\otimes n};\\
    \SS_{0,t}(\gamma) &\coloneqq \big(1,\SS_{0,t}(\gamma)^{1},\SS_{0,t}(\gamma)^{2},\dots\big)\in T((V)).
\end{align*}
\end{remark}
An important subset of weakly geometric rough paths are the geometric ones. This is the subset of paths of $\WGp$ which may be approximated in $\dpvar$ by paths of finite $q$-variation for some $q<p$. Formally, we use the following definition.
\begin{definition}
    The space of geometric rough paths $G\Omega_p(V)\subset \WGp$ is the set of continuous paths $\X^{\leq \floor{p}}:[0,T]\to G^{(\floor{p})}$, starting at the unit, for which there exists a sequence of paths $\big(\X^{\leq 1}_n\big)_{n=1}^\infty\subset WG\Omega_1(V)$ with
    \[
    \dpvar\Big(\X^{\leq \floor{p}}_n,\X^{\leq \floor{p}}\Big)\to 0.
    \]
\end{definition}
\begin{remark}\label{rem: p_1_geometric}
    Clearly in the case $p=1$, we have by definition that $\WG{1}=G\Omega_1(V)$. However, the equivalent notion of ``geometric'' rough paths is the space $C^{0,1-\text{var}}\big([0,T], V\big)$ of absolutely continuous paths, given by the closure of smooth paths in the $1$-variation topology.
\end{remark}

\subsection{Basic Properties of Rough Paths and Signatures}
We record here several standard, but important, facts concerning the spaces $\Gp$, $\WGp$ and their signatures. Firstly, we note that $\Gp$ is a closed and separable subset of $\WGp$, see \cite[Proposition 8.25, Corollary 1.35]{FV} for a proof.
\begin{lemma}\label{lem: pw_density}
    The spaces $\Gp$ for $p>1$, and $C^{0,1-\text{var}}\big([0,T], V\big)$ for $p=1$, are Polish spaces with the lifts of piecewise linear paths forming a dense subset.
\end{lemma}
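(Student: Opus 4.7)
The plan is to establish the three properties in turn—complete metrisability, density of piecewise linear lifts, and separability—with the density step feeding into separability.

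\textbf{Step 1 (Complete metrisability).} I would first record that the ambient spaces $\WGp$ (under $\dpvar$) and $WG\Omega_1(V)$ (under the $1$-variation metric) are complete. This is the standard argument: a Cauchy sequence converges uniformly to a continuous limit, and the $p$-variation of the limit is controlled by a $\liminf$ applied to the supremum definition. Then I verify that the relevant subsets are closed. For $p > 1$, $\Gp$ is closed in $\WGp$ via a diagonal argument: if $\X_n \in \Gp$ with $\dpvar(\X_n, \X) \to 0$, pick for each $n$ a lift of a bounded variation path $\Y_n$ with $\dpvar(\X_n, \Y_n) < 1/n$; then $\Y_n \to \X$, whence $\X \in \Gp$. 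For $p = 1$, closedness of $C^{0,1-\text{var}}([0,T], V)$ holds by definition as the $1$-variation closure of smooth paths. A closed subset of a complete metric space is complete.

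\textbf{Step 2 (Density of piecewise linear lifts).} For $p > 1$, by definition of $\Gp$ it suffices to approximate an arbitrary $\gamma \in WG\Omega_1(V)$, lifted to $G^{(\floor{p})}$, by lifts of piecewise linear paths in $\dpvar$. I would take the piecewise linear interpolations $\gamma_n$ of $\gamma$ on an exhausting sequence of partitions $\PP_n$. These interpolants are uniformly bounded in $1$-variation (in fact, dominated by the $1$-variation of $\gamma$ itself) and converge uniformly to $\gamma$. A standard interpolation estimate—uniform convergence together with a uniform $q$-variation bound implies $p$-variation convergence for any $p > q$—yields $d_{\text{p-var}}^{(1)}(\gamma_n, \gamma) \to 0$. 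The continuity of the Lyons lift (the extension theorem in the excerpt) then promotes this to convergence in $\dpvar$ at level $\floor{p}$. For $p = 1$, piecewise linear interpolants of an absolutely continuous path converge in $1$-variation by a classical argument based on absolute continuity and dominated convergence applied to the derivative.

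\textbf{Step 3 (Separability).} Since $V$ is finite-dimensional, the family of piecewise linear paths with rational breakpoints in $[0,T]$ and rational vertex coordinates (with respect to a fixed basis) is countable. A small perturbation of any piecewise linear path to this countable family yields a $\dpvar$-approximation, and combined with Step 2 this furnishes a countable dense subset.

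The main obstacle is the density step for $p > 1$: although intuitive, it requires care to verify the interpolation-type estimate in the Carnot--Caratheodory metric on $G^{(\floor{p})}$, which is not a priori an integral functional along the path. The simplifying observation is that the extension theorem reduces this to the level-$1$ statement, where piecewise linear interpolation on increasingly fine partitions of a BV path converges uniformly and does not inflate the $1$-variation, making the interpolation estimate immediate.
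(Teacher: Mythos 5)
The paper does not actually prove this lemma: it records it as a standard fact and points to \cite[Proposition~8.25, Corollary~1.35]{FV}. Your proposal reconstructs the argument that sits behind those citations, and it is correct in outline and in most details: completeness of the ambient space $\big(\WGp, \dpvar\big)$ together with closedness of $\Gp$ (your diagonal extraction is exactly the right way to see closedness, and for $p=1$ closedness of $C^{0,1\text{-var}}$ is definitional), density of piecewise linear lifts by reduction to level one, and separability via piecewise linear paths with rational breakpoints and vertices. So the comparison is simply that you supply a self-contained proof where the paper defers to the literature; nothing in your route conflicts with what the paper relies on later.

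The one step you should tighten is the ``promotion'' in Step~2 when $p\geq 2$. Knowing $d^{(1)}_{\text{p-var}}(\gamma_n,\gamma)\to 0$ at level one is not, by itself, a legitimate input to the extension theorem: for $p\geq 2$ the lift to $G^{(\floor{p})}$ is neither determined by nor continuous in level-one data of mere finite $p$-variation, so ``continuity of the Lyons lift'' cannot be invoked at exponent $p$ from level-one information. The repair is immediate and uses only what you already have: the uniform bound on the interpolants is a $1$-variation bound, so the interpolation estimate gives $d^{(1)}_{q\text{-var}}(\gamma_n,\gamma)\to 0$ for \emph{every} $q>1$, in particular for some $q\in(1,2)$; then the Lyons lift $WG\Omega_q(V)\to C^{q\text{-var}}\big([0,T],G^{(\floor{p})}\big)$ is continuous (this is the continuity statement actually available, cf.\ the proof of \cref{lem: pq_var}), yielding $d^{(\floor{p})}_{q\text{-var}}(\gamma_n^{\leq\floor{p}},\gamma^{\leq\floor{p}})\to 0$, and finally $d^{(\floor{p})}_{\text{p-var}}\leq d^{(\floor{p})}_{q\text{-var}}$ since $q\leq p$. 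With that one-line adjustment the argument is complete.
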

The following lemma will be crucial in generalising the results of \cite{CT_topologies} to the setting $p>1$.
\begin{lemma}[Properties of the $p$-variation metric]\label{lem: pq_var}
    Fix $q\geq p$.
    \begin{enumerate}[label=\arabic*)]
        \item Suppose $\X^{\leq n},\Y^{\leq n}\in \CGp$, then
            \[
            d_{\text{q-var}}^{(n)}\big(\X^{\leq n},\Y^{\leq n}\big)\leq d_{\text{p-var}}^{(n)}\big(\X^{\leq n},\Y^{\leq n}\big).
            \]
        \item If $\X^{\leq \floor{p}}_n\to\X^{\leq \floor{p}}$ in $\WGp$, then $\X^{\leq \floor{q}}_n\to\X^{\leq \floor{q}}$ in $WG\Omega_q(V)$.
        \item We have the strict inclusions $\Gp\subset\WGp\subset G\Omega_q(V)$.
    \end{enumerate}
\end{lemma}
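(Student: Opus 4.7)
My overall plan is to reduce everything to the elementary $\ell^p\hookrightarrow \ell^q$ embedding and then bootstrap using the continuity of Lyons' lift supplied by the extension theorem.

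For item 1, the key inequality is that for any non-negative finite sequence $(a_j)$ and any $q\geq p\geq 1$, one has $\left(\sum_j a_j^q\right)^{1/q}\leq \left(\sum_j a_j^p\right)^{1/p}$; indeed, each term satisfies $a_j\leq \left(\sum_k a_k^p\right)^{1/p}$, so $a_j^q\leq a_j^p\left(\sum_k a_k^p\right)^{(q-p)/p}$ and the claim follows by summing. I would fix a partition $\PP$ and a level $1\leq i\leq n$, set
\[
a_j \coloneqq \left\|\pi_i\Big(\big(\X^{\leq n}_{t_j}\big)^{-1}\X^{\leq n}_{t_{j+1}} - \big(\Y^{\leq n}_{t_j}\big)^{-1}\Y^{\leq n}_{t_{j+1}}\Big)\right\|^{1/i},
\]
apply the inequality to obtain $\left(\sum_j a_j^q\right)^{i/q}\leq \left(\sum_j a_j^p\right)^{i/p}$, then take the supremum over $\PP$ and the maximum over $i\leq n$.

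Item 2 follows from item 1 together with the Lyons' Extension Theorem. If $\floor{q}=\floor{p}$ the conclusion is item 1 verbatim. Otherwise, clause 2 of the extension theorem states that $\pi^{(\floor{q})}\circ\SS_{0,\cdot}:\WGp\to C^{\text{p-var}}\bigl([0,T],G^{(\floor{q})}\bigr)$ is continuous, so the hypothesis yields $\X^{\leq \floor{q}}_n\to \X^{\leq \floor{q}}$ with respect to $d_{\text{p-var}}^{(\floor{q})}$. Item 1, applied at level $\floor{q}$, then majorises $d_{\text{q-var}}^{(\floor{q})}$ by $d_{\text{p-var}}^{(\floor{q})}$, which gives convergence in $WG\Omega_q(V)$.

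For item 3 the inclusion $\Gp\subset\WGp$ holds by definition. For $\WGp\subset G\Omega_q(V)$ (where $q>p$ is needed to make the statement meaningful), given $\X^{\leq\floor{p}}\in\WGp$ I would approximate its Lyons lift $\X^{\leq\floor{q}}$ by geodesic (equivalently, piecewise linear in $G^{(\floor{q})}$) interpolations $\X^{(n),\leq 1}$ along a nested sequence of partitions. These interpolants have finite $1$-variation and, by the classical interpolation estimate on the free nilpotent group (see \cite[Proposition 8.15]{FV} in our notation), satisfy $d_{\text{p-var}}^{(\floor{q})}(\X^{(n),\leq\floor{q}},\X^{\leq\floor{q}})$ bounded by a constant times $\omega_{\X}(\mathrm{mesh})^{1-p/q}$, so convergence holds in $q$-variation though in general not in $p$-variation. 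Strictness of the first inclusion is standard (for example, the pure-area lift of the zero path that is not a $p$-variation limit of Young integrals, see \cite[Exercise 9.41]{FV}), and strictness of the second is witnessed by any real-valued path with finite $q$-variation but infinite $p$-variation lifted trivially into $V$.

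The main obstacle is the second inclusion in item 3: unlike the vector-valued case, the approximating sequence must be built directly on the step-$\floor{q}$ nilpotent group and one must control the $q$-variation of the interpolation error on every level $1\leq i\leq\floor{q}$ simultaneously, which is the delicate part and where Friz--Victoir style group-valued interpolation estimates are essential.
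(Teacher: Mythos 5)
Your proposal is correct and follows essentially the same route as the paper: item 1 via the monotonicity of $\ell^r$-norms applied termwise in the definition of $d^{(n)}_{\text{p-var}}$, item 2 by combining the continuity of Lyons' lift with item 1, and item 3 by the standard geodesic-interpolation argument that the paper simply outsources to \cite[Corollary 8.24]{FV}. One minor caveat on your strictness witness for $\WGp\subset G\Omega_q(V)$: with the paper's definition of $G\Omega_q(V)$ as a $d_{\text{q-var}}$-closure, a path of finite $q$-variation need not lie in $G\Omega_q(V)$ (Wiener-class issue), so you should take a path of finite $q'$-variation for some $p<q'<q$ but infinite $p$-variation.
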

\begin{proof}
    The first item is a consequence of the definition of \cref{eq: pvar_metric} and the inequality
    \[
    \Big(\sum\abs{a_j}^\frac{q}{i}\Big)^{\frac{i}{q}}\leq \Big(\sum\abs{a_j}^\frac{p}{i}\Big)^{\frac{i}{p}}.
    \]
    For the second item, Lyons' lift defines a continuous map from $\WGp\hookrightarrow \CGp$ for $n\geq \floor{p}$, see \cite[Corollary 9.11]{FV}. The claimed convergence is a consequence of this continuity and the first item. The final item is standard, see \cite[Corollary 8.24]{FV}.
\end{proof}
\begin{definition}
     For every $\X\in WG\Omega_p(V)$, we define the control $\omega_{\X, p}(s,t)\coloneqq \big\|\X\big\|_{\text{p-var};[s,t]}^p$. Let 
    \[
    \varphi(t) \coloneqq\frac{\omega_{\X, p}(0,t)}{\omega_{\X, p}(0,T)}T,
    \]
    and $\tilde{\X}$ the path implicitly defined by the relation $\X = \tilde{\X} \circ \varphi$. We call $\tilde{\X}$ the  ``H\"older-control reparameterisation'' of $\X$.
\end{definition}
\begin{proposition}\label{prop: constant_control}
    For any $\X\in WG\Omega_p(V)$, its H\"older-control reparameterisation $\tilde{\X}$ satisfies
    \begin{enumerate}[label = \arabic*)]
        \item $\big\|\tilde{\X}\big\|_{\text{p-var};[0,t]}^p=\frac{t}{T}\big\|\tilde{\X}\big\|_{\text{p-var};[0,T]}^p$;
        \item $\tilde{\X}$ is $\tfrac{1}{p}$-H\"older continuous with $\big\|\tilde{\X}\big\|_{\frac{1}{p}\text{-H\"ol}}\leq \frac{1}{T^{\nicefrac{1}{p}}}\big\|\tilde{\X}\big\|_{\text{p-var};[0,T]}$;\
    \end{enumerate}
\end{proposition}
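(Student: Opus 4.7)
The plan is to exploit two standard facts about the $p$-variation control: invariance of $p$-variation under continuous non-decreasing reparameterisation, and superadditivity of $\omega_{\X,p}$. We may assume $\omega_{\X,p}(0,T)>0$, since otherwise $\X$ and hence $\tilde{\X}$ are constant and both claims are trivial. The map $\varphi:[0,T]\to[0,T]$ is continuous, non-decreasing, with $\varphi(0)=0$ and $\varphi(T)=T$; this uses continuity of the control $s\mapsto\omega_{\X,p}(0,s)$, which is a standard consequence of $\X$ being continuous with finite $p$-variation. Because $\X$ is constant on any interval where $\omega_{\X,p}(0,\cdot)$ (equivalently $\varphi$) is constant, the relation $\tilde{\X}_{\varphi(t)}=\X_t$ unambiguously defines a continuous path $\tilde{\X}:[0,T]\to G^{(\lfloor p\rfloor)}$.

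For the first item, I would invoke reparameterisation invariance of the $p$-variation (which follows immediately from the fact that partitions pull back through $\varphi$) to write
\[
\bigl\|\tilde{\X}\bigr\|_{\text{p-var};[0,\varphi(t)]}^p=\bigl\|\X\bigr\|_{\text{p-var};[0,t]}^p=\omega_{\X,p}(0,t).
\]
The definition of $\varphi$ gives $\omega_{\X,p}(0,t)=\varphi(t)\,\omega_{\X,p}(0,T)/T$, and setting $t=T$ yields $\|\tilde{\X}\|_{\text{p-var};[0,T]}^p=\omega_{\X,p}(0,T)$. Substituting back and using that $\varphi:[0,T]\twoheadrightarrow[0,T]$ is surjective then gives
\[
\bigl\|\tilde{\X}\bigr\|_{\text{p-var};[0,s]}^p=\frac{s}{T}\bigl\|\tilde{\X}\bigr\|_{\text{p-var};[0,T]}^p\qquad\forall s\in[0,T].
\]

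For the second item, I would use that $\omega_{\tilde{\X},p}$ is superadditive, so for $0\leq s\leq t\leq T$,
\[
\bigl\|\tilde{\X}\bigr\|_{\text{p-var};[s,t]}^p\leq\bigl\|\tilde{\X}\bigr\|_{\text{p-var};[0,t]}^p-\bigl\|\tilde{\X}\bigr\|_{\text{p-var};[0,s]}^p=\frac{t-s}{T}\bigl\|\tilde{\X}\bigr\|_{\text{p-var};[0,T]}^p,
\]
where the equality is by the first item. Combined with $d(\tilde{\X}_s,\tilde{\X}_t)\leq\|\tilde{\X}\|_{\text{p-var};[s,t]}$, this gives the desired $\tfrac{1}{p}$-H\"older bound.

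The only genuine subtlety is the well-definedness of $\tilde{\X}$ when $\varphi$ has flat regions; the conceptual content is captured by the two displays above and is otherwise routine. I expect no serious obstacle.
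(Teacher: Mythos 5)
Your argument is correct and is essentially the standard proof: the paper itself simply cites \cite[Proposition 5.14]{FV} and related references, and the argument given there is exactly yours (reparameterisation invariance of $p$-variation plus superadditivity of the control, together with the observation that $\X$ is constant on intervals where $\varphi$ is flat, so $\tilde{\X}$ is well defined). No gaps; the only point worth making explicit is the continuity of $s\mapsto\omega_{\X,p}(0,s)$, which you correctly flag as a standard fact for continuous paths of finite $p$-variation.
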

\begin{proof}
    Both points are well-known and can be found in \cite[Proposition 5.14]{FV}, \cite{SaintFlour} or \cite[Lemma 3.2.2]{Chevyrev_thesis}.
\end{proof}
\begin{remark}
    Note that if $\X\in\WGp$ satisfies $\norm{\X}_{\text{p-var};[0,t]}^p=\frac{t}{T}\norm{\X}_{\text{p-var};[0,T]}$, then $\X=\tilde{\X}$, since $\phi(\cdot)$ will be the identity.
\end{remark}
\begin{remark}
    In the case $p=1$, H\"older-control parameterisation is equivalent to constant-speed parameterisation, i.e. $\vert\gamma_t^{\prime}\vert=\frac{1}{T}\norm{\gamma}_{1\text{-var};[0,T]}$ almost-everywhere. Indeed, if $\gamma$ has H\"older-control parameterisation then its weak derivative $\gamma^\prime\in L^1([0,T],V)$ is uniquely defined and must satisfy
    \[
    \frac{t}{T}\norm{\gamma}_{1\text{-var};[0,T]}=\norm{\gamma}_{1\text{-var};[0,t]}=\int_0^t\vert\gamma^\prime_u\vert\dif u.
    \]
    Hence $\vert\gamma^\prime_u\vert=\frac{1}{T}\norm{\gamma}_{1\text{-var};[0,T]}$ almost-everywhere by the Lebesgue Differentiation Theorem.
\end{remark}
An important concept in the study of signatures is that of path-concatenation and reversal.
\begin{definition}
Given $\X,\Y\in \WGp$, we define their concatenation as 
\[
(\X\star\Y)(t)\coloneqq
\begin{cases}
    \X\big(\tfrac{2t}{T}\big),\quad &t\in \big[0, \tfrac{T}{2}\big],\\
    \X\big(T\big)\Y\big(\tfrac{2t-T}{2}\big),\quad &t\in \big[\tfrac{T}{2}, T\big],
\end{cases}
\]
and the reversal of a path by
\[
\overleftarrow{\X}_t\coloneqq \big(\X_T\big)^{-1}\X_{T-t}.
\]
\end{definition}
The following is a well-known fact, whose proofs follows essentially the same proof as the well known fact that $C^{\text{p-var}}\big([0,T], V\big)$ is a Banach space under the $p$-variation norm.
\begin{lemma}
    Fix $p\geq 1$, and let $\X,\Y\in \WGp$ then
    \[
    \norm{\X\star\Y}_{\text{p-var}}\leq \norm{\X}_{\text{p-var}}+\norm{\Y}_{\text{p-var}}.
    \]
\end{lemma}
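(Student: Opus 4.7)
The plan is the standard Minkowski-type proof that $p$-variation is subadditive under concatenation, using the left-invariance of the Carnot-Caratheodory metric $d$ on $G^{(\floor{p})}$ together with Minkowski's inequality in $\ell^p$.

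Fix a partition $\PP = \{0 = t_0 < \dots < t_N = T\}$ of $[0,T]$, write $Z = \X \star \Y$, and let $k$ be the largest index with $t_k \leq T/2$. For $j < k$ the increment $d(Z_{t_j}, Z_{t_{j+1}})$ equals $d(\X_{s_j}, \X_{s_{j+1}})$ under the affine reparameterisation of the first half; for $j > k$, left-invariance of $d$ applied to the common prefactor $\X_T$ shows that $d(Z_{t_j}, Z_{t_{j+1}}) = d(\Y_{u_j}, \Y_{u_{j+1}})$ for suitable reparameterised times $s_j, u_j$. The (possibly trivial) crossing interval $[t_k, t_{k+1}]$ is handled by the triangle inequality and left-invariance,
\[
d(Z_{t_k}, Z_{t_{k+1}}) \leq d(Z_{t_k}, Z_{T/2}) + d(Z_{T/2}, Z_{t_{k+1}}) = d(\X_{s_k}, \X_T) + d(\Y_0, \Y_{u_{k+1}}).
\]
Arrange these estimates into two non-negative sequences $(a_j)_{j=0}^{N-1}$ and $(b_j)_{j=0}^{N-1}$: let $a_j$ record the $\X$-distance (nonzero only for $j \leq k$) and $b_j$ the $\Y$-distance (nonzero only for $j \geq k$), so that $d(Z_{t_j}, Z_{t_{j+1}}) \leq a_j + b_j$ for every $j$. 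Minkowski's inequality in $\ell^p$ then gives
\[
\left(\sum_j d(Z_{t_j}, Z_{t_{j+1}})^p\right)^{\!1/p} \leq \left(\sum_j a_j^p\right)^{\!1/p} + \left(\sum_j b_j^p\right)^{\!1/p},
\]
and each sum on the right is a $p$-variation sum of $\X$ or $\Y$ along a partition of $[0,T]$, hence bounded by $\norm{\X}_{\text{p-var}}$ or $\norm{\Y}_{\text{p-var}}$ respectively. Taking a supremum over $\PP$ yields the claim.

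The only subtlety is the bookkeeping at the crossing interval when $T/2 \notin \PP$; organising the upper bound into two sequences with essentially disjoint support (overlapping only at index $k$) is what allows Minkowski in $\ell^p$ to deliver the optimal constant $1$, rather than the suboptimal $2^{(p-1)/p}$ one would obtain from the naive inequality $(a+b)^p \leq 2^{p-1}(a^p + b^p)$.
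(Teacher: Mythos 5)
Your proof is correct and follows essentially the same route as the paper's: identify the single crossing interval, bound its increment by the triangle inequality (splitting at $T/2$), and then apply Minkowski's inequality in $\ell^p$ to the two resulting increment sequences before taking the supremum over partitions. The paper phrases the final step as ``sub-additivity of $\ell_p$ norms,'' which is precisely your Minkowski arrangement with the two sequences overlapping only at the crossing index.
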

\begin{proof}
    Fix $\PP$ to be a partition of $[0,T]$ and let $t_j\in\PP$ be such that $\frac{1}{2}\in [t_j, t_{j+1}]$. Notice that
    \begin{equation}\label{eq: p_sub_additive}
    d\big((\X\star\Y)_{t_j},(\X\star\Y)_{t_{j+1}}\big)=d\big(\X_{2tj}, \X_T\Y_{2t_{j+1}-T}\big)\leq d\big(\X_{2t_j}, \X_T\big)+d\big(\Y_0,\Y_{2t_{j+1}-T}\big),
    \end{equation}
    where the inequality follows from the triangle inequality and the definition of concatenation. Then
    \begin{align*}
        \left(\sum_{t_i\in\PP}d\big((\X\star\Y)_{t_i},(\X\star\Y)_{t_{i+1}}\big)^p\right)^{\frac{1}{p}}\leq & \left(\sum_{t_i<t_j}d\big(\X_{2t_i},\X_{2t_{i+1}})^p +d\big(\X_{2t_j}, \X_T\big)^p\right)^{\frac{1}{p}}\\
        &+\left(d\big(\Y_0,\Y_{2t_{j+1}-T}\big)^p+\sum_{t_i>t_j}d\big(\Y_{2t_i-T},\Y_{2t_{i+1}-T}\big)^p\right)^{\frac{1}{p}}\\
        \leq& \norm{\X}_{\text{p-var}}+\norm{\Y}_{\text{p-var}},
    \end{align*}
    where the first inequality follows \cref{eq: p_sub_additive} and the sub-additivity of $\ell_p$ norms. The second inequality follows from taking the supremum over partitions of the right-hand-side. We conclude by taking supremums over partitions $\PP$ on the left-hand-side.
\end{proof}
The final result of this subsection is commonly known as Chen's identity \cite{Chen1} and states that the set of signatures is closed under multiplication and inverses. We will denote by $o$ the constant path in $\WGp$ for which $o(t)=\mathbf{1}$ for all $t\in [0,T]$.
\begin{proposition}
For $\X,\Y\in \WGp$, the signature map satisfies
\begin{enumerate}[label = \arabic*)]
    \item Chen's identity: $(\X\star\Y)^{<\infty}=\X^{<\infty}\Y^{<\infty}$;
    \item Inverses: $\X^{<\infty}\overleftarrow{\X}^{<\infty}=o^{<\infty}=\mathbf{1}$.
\end{enumerate}
\end{proposition}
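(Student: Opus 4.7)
The plan is to first verify both identities for lifts of piecewise linear paths by direct computation on iterated integrals, and then extend to arbitrary $\X,\Y\in \WGp$ by combining the density results of \cref{lem: pw_density} and \cref{lem: pq_var} with the continuity provided by Lyons' Extension Theorem.

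For piecewise linear (or smooth) $\X$ and $\Y$, the level-$n$ projection of the signature is the iterated Young integral over the simplex $\{0\le t_1\le\cdots\le t_n\le T\}$. To obtain Chen's identity at level $n$, partition this simplex according to the largest index $k$ with $t_k\le T/2$. On each piece the integrand factorises as the product of the corresponding increments of $\X$ on $[0,T/2]$ and $\Y$ on $[T/2,T]$, so that after the obvious rescaling one obtains
\[
\pi_n\bigl((\X\star\Y)^{<\infty}\bigr)=\sum_{k=0}^n \pi_k(\X^{<\infty})\otimes \pi_{n-k}(\Y^{<\infty})=\pi_n(\X^{<\infty}\Y^{<\infty}).
\]
For the inverse identity, a change of variables $s\mapsto T-s$ shows that iterated integrals of $\overleftarrow{\X}$ over the ordered simplex equal iterated integrals of $\X$ over the reversed simplex; combined with Chen's identity applied to $\X\star\overleftarrow{\X}$, the contributions at each tensor level cancel and give $\X^{<\infty}\overleftarrow{\X}^{<\infty}=\mathbf{1}$.

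To pass to the general case, fix any $q>p$. By \cref{lem: pq_var}, $\X,\Y\in G\Omega_q(V)$, and by \cref{lem: pw_density} there exist piecewise linear approximations $\X_k\to \X$ and $\Y_k\to\Y$ in the $q$-variation topology. The sub-additivity lemma just proven (applied with $q$ in place of $p$) and a short argument splitting at $t=T/2$ and using group left-translation yield $\X_k\star\Y_k\to \X\star\Y$ and $\overleftarrow{\X_k}\to \overleftarrow{\X}$ in $q$-variation. Lyons' Extension Theorem makes the level-$n$ projection of the signature continuous from $G\Omega_q(V)$ to $V^{\otimes n}$ for every $n$, and the multiplication in each truncated tensor algebra $T^{(n)}(V)$ is continuous. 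Passing to the limit level by level in the identities already established for $(\X_k,\Y_k)$ determines both equalities in $T((V))$.

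The main obstacle is that $\WGp$ is not in general the $p$-variation closure of piecewise linear paths when $p$ is non-integer, which rules out a direct $p$-variation approximation argument; one has to carry out the approximation in the weaker $q$-variation topology for $q>p$, which is compatible with the signature thanks to \cref{lem: pq_var}(2). A minor subsidiary point is that the sub-additivity lemma bounds $\|\X\star\Y\|_{\text{q-var}}$ but not the distance $d_{\text{q-var}}(\X_k\star\Y_k,\X\star\Y)$ directly; the latter follows by splitting the defining partition at $t=T/2$ and reducing to the $q$-variation convergence on each half, using that left multiplication by $\X_k(T)\to\X(T)$ in $G^{(\lfloor q\rfloor)}$ is an isometry.
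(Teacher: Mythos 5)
The paper does not actually prove this proposition: it is presented as a classical fact (``commonly known as Chen's identity'') with a citation, so there is no in-paper argument to compare against. Your density proof is the standard self-contained route and is essentially sound. The simplex decomposition gives Chen's identity for lifts of piecewise linear paths; \cref{lem: pq_var} and \cref{lem: pw_density} supply piecewise linear approximants in $q$-variation for any $q>p$ (the detour through $q$-variation is indeed necessary, as you note, since $\WGp$ is not the $p$-variation closure of piecewise linear lifts); Lyons' Extension Theorem gives level-by-level continuity of the signature; and continuity of concatenation and reversal in $d_{\text{q-var}}$ is routine along the lines you sketch, the only delicate term being the single partition interval straddling $T/2$, which you handle correctly.

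The one step that needs tightening is the inverse identity at the piecewise linear level. The change of variables $s\mapsto T-s$ shows that $\pi_n\big(\overleftarrow{\X}^{<\infty}\big)$ equals $(-1)^n$ times the order-reversal of $\pi_n\big(\X^{<\infty}\big)$, and Chen's identity reduces the claim to the vanishing, for $n\geq 1$, of
\[
\sum_{k=0}^n(-1)^{n-k}\,\pi_k\big(\X^{<\infty}\big)\otimes \mathrm{rev}\Big(\pi_{n-k}\big(\X^{<\infty}\big)\Big),
\]
where $\mathrm{rev}(v_1\otimes\dots\otimes v_m)=v_m\otimes\dots\otimes v_1$. This cancellation does \emph{not} follow from the simplex decomposition alone: already at $n=2$ it is the integration-by-parts (shuffle) identity $g_2+\mathrm{rev}(g_2)=g_1\otimes g_1$, i.e.\ it uses that $\X^{<\infty}$ is group-like. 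Either invoke the shuffle relations for iterated integrals of bounded variation paths, or argue more directly via the flow property of the linear equation $\dif S_{0,t}=S_{0,t}\,\dif\X_t$, which gives $S_{0,T}\,S_{T,0}=S_{0,0}=\mathbf{1}$ with $S_{T,0}=\overleftarrow{\X}^{<\infty}$. With that repair the argument is complete.
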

\subsection{Piecewise Linear Paths}
Central to the analysis in this article is the lift of piecewise linear paths, and in particular axis paths. Given a vector $v\in V$, we let $\gamma_v\in WG\Omega_1(V)$ be the straight line defined by $\gamma_v(t)=\frac{t}{T}v$. For a collection of vectors $v_1,\dots,v_n\in V$, we define the piecewise linear path $\X^{\leq 1}\in WG\Omega_1(V)$ by $\X^{\leq 1}\coloneqq \gamma_{v_1}\star\gamma_{v_2}\star\dots\star\gamma_{v_n}$ with $1$-variation $\big\|\X^{\leq 1}\big\|_{1\text{-var}}=\abs{v_1}+\dots+\abs{v_n}$. We will often reparameterise $\X^{\leq 1}$ to have constant $1$-H\"older-control variation, i.e. parameterised at constant speed. An important example of piecewise linear paths are the so-called axis paths; these satisfy $\langle v_i,v_{i+1}\rangle = 0$ for $i=1,\dots, n-1$.
\section{Unparameterised Rough Path Spaces}\label{sec: unparameterised_paths}
The terminal time solution of a rough-differential equation (RDE)
\begin{equation}
    \dif Y_t=f(Y_t)\dif \X_t,\ Y_0 = y,
\end{equation}
is invariant to not only reparameterisations of of $\X$, but also pieces of $\X$ which retrace themselves. The notion of tree-like equivalence formalises these concepts.
\begin{definition}\label{def: tree_equivalence}
    A path $\X\in\WGp$ is called tree-like if there exists a real tree $\mathcal{T}$ such that $\X$ admits a factorisation $\X=\phi \circ \rho$ through a pair of continuous maps $\rho:[0,T]\mapsto \mathcal{T}$ and $\phi:\mathcal{T} \mapsto V$, where $\rho(0)=\rho(T)$. For two paths $\X,\Y\in\WGp$, if $\X\star\overleftarrow{\Y}$ is tree-like, we call $\X$ and $\Y$ tree-like equivalent and write $\X\sim_\tau\Y$.
\end{definition}
A path devoid of tree-like pieces is called tree-reduced.
\begin{definition}[\cite{BGLY}]
A path $\X\in WG\Omega_p(V)$ is called tree-reduced if the path $t\mapsto\SS_{0,t}(\X)\in G^{(\star)}_{\text{p.r.c}}$ is injective.
\end{definition}
Tree-reduced paths prove a fundamental object in the proof of the uniqueness of the signature in \cite{BGLY} and also form a core concept in our study of topologies on unparameterised path space.
The fact that the signature is unique up to tree-like equivalence was first shown in the case $p=1$ in \cite{HL} via the study of the Hyperbolic development map. More recently, this result has been extended to weakly-geometric rough paths in \cite{BGLY} through the study of one-forms on truncated signature paths.
\begin{theorem}[\cite{BGLY}]
    Let $p\geq 1$, then
    \begin{enumerate}[label=\arabic*)]
        \item The relation $\sim_\tau$ defines an equivalence relation on $\WGp$;
        \item For all $\X,\Y\in\WGp$ it holds that $\X^{<\infty}=\Y^{<\infty}$ if and only if $\X\sim_\tau\Y$.
        \item Each $\sim_\tau$ equivalence class contains a unique tree-reduced representative (up to reparameterisation).
    \end{enumerate}
\end{theorem}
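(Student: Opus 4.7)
The plan is to reduce all three items to one central characterisation: a path $\X \in \WGp$ is tree-like if and only if its signature $\X^{<\infty}$ equals $\mathbf{1}$. Once this equivalence is in hand, items 1 and 2 are essentially formal consequences of Chen's identity, and item 3 follows from a careful "collapsing" argument on tree-like sub-pieces.

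For the characterisation, the easy direction is tree-like $\Rightarrow$ trivial signature. If $\X = \phi \circ \rho$ factors through a real tree $\mathcal{T}$, the idea is to approximate $\rho:[0,T]\to\mathcal{T}$ by piecewise geodesic loops (using that $\mathcal{T}$ is path-connected and that every loop in a tree backtracks). Composing with $\phi$ yields piecewise linear paths in $V$ whose Lyons lifts converge to $\X$ in $\dpvar$ by \cref{lem: pw_density} together with $p$-variation estimates on the tree parameter, and whose signatures are each $\mathbf{1}$ by a direct cancellation argument (each edge segment is paired with its reverse). Continuity of $\SS$ then forces $\X^{<\infty}=\mathbf{1}$. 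The hard direction, trivial signature $\Rightarrow$ tree-like, is where the bulk of \cite{BGLY} lives: one defines an equivalence relation on $[0,T]$ declaring $s \sim t$ when the truncated signature path $u \mapsto \pi^{(n)}(\SS_{0,u}(\X))$ restricted to $[s,t]$ returns to its starting value in a compatible way at all levels, and then shows the quotient $[0,T]/\sim$ carries a metric making it into a real tree through which $\X$ factors. This is the step I expect to be the main obstacle; it requires the full machinery of one-forms on truncated signature paths and careful use of Lyons' extension to propagate information between truncation levels.

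Given the characterisation, item 1 reduces to group theory in $G^{(\star)}_{\text{p.r.c}}$: reflexivity comes from $(\X\star\overleftarrow{\X})^{<\infty}=\X^{<\infty}(\X^{<\infty})^{-1}=\mathbf{1}$ via Chen's identity and the reversal formula; symmetry from the fact that reversing a tree-like path preserves the trivial signature; and transitivity from chaining two triviality statements $\X^{<\infty}(\Y^{<\infty})^{-1}=\mathbf{1}$ and $\Y^{<\infty}(\Z^{<\infty})^{-1}=\mathbf{1}$. Item 2 is even more direct: $\X\sim_\tau\Y$ means $(\X\star\overleftarrow{\Y})^{<\infty}=\mathbf{1}$ by the characterisation, and this equals $\X^{<\infty}(\Y^{<\infty})^{-1}$ by Chen's identity, so it is equivalent to $\X^{<\infty}=\Y^{<\infty}$.

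For item 3, existence of a tree-reduced representative is obtained by transfinite removal of tree-like sub-loops: one shows that the set of times $t$ at which $\X$ has a "maximal tree-like excursion" can be collapsed to a single point while preserving the signature and keeping finite $p$-variation, where the $p$-variation control relies on $\omega_{\X,p}$ being a super-additive control (so collapsing intervals only decreases the $p$-variation). Iterating (or applying Zorn's lemma on the poset of quotients whose signature agrees with $\X^{<\infty}$) produces a tree-reduced $\tilde\X$ in the same class. Uniqueness up to reparameterisation then follows from item 2 together with the definition of tree-reduced via injectivity of $t\mapsto \SS_{0,t}$: if $\tilde\X,\tilde\Y$ are both tree-reduced and equivalent, then $t\mapsto\SS_{0,t}(\tilde\X)$ and $t\mapsto\SS_{0,t}(\tilde\Y)$ have the same image in $G^{(\star)}_{\text{p.r.c}}$ and are both injective, so the composition $\tilde\Y^{-1}\circ\tilde\X$ gives a well-defined continuous increasing bijection of $[0,T]$, i.e.\ a reparameterisation.
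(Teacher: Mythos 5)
The paper does not prove this theorem: it is imported wholesale from \cite{BGLY}, and the surrounding text only records that the $p=1$ case goes back to \cite{HL} via hyperbolic development while the general case rests on one-forms on truncated signature paths. So the honest comparison is against the argument of \cite{BGLY}, whose architecture your outline identifies correctly but does not reproduce. In particular, the implication ``trivial signature $\Rightarrow$ tree-like'' is not a step of the proof that happens to be hard --- it \emph{is} the theorem; items 1) and 2) really are formal consequences of that characterisation together with Chen's identity, exactly as you say, so deferring the characterisation leaves the proposal as a reading guide rather than a proof. That part of your reduction is sound; the issue is that everything of substance sits inside the black box.

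In the parts you do sketch there are concrete problems. For the direction ``tree-like $\Rightarrow$ trivial signature'': composing $\phi$ with piecewise geodesics of $\mathcal{T}$ does not produce piecewise linear paths in $V$ ($\phi$ is merely continuous), for $p\geq 2$ one must control the whole lift and not just the level-one trace, and convergence in $\dpvar$ is the wrong target --- the available mode is uniform convergence with uniformly bounded $p$-variation, hence convergence in $q$-variation for $q>p$, after which one invokes continuity of $\SS$ in that topology; \cref{lem: pw_density} concerns density of piecewise linear lifts in $G\Omega_p(V)$ and gives no approximation of a \emph{given} tree-like path by loops with cancelling signatures. For existence in item 3), the Zorn's lemma argument is not shown to have upper bounds for chains, and the actual construction is explicit: one quotients $[0,T]$ by the relation ``$\X|_{[s,t]}$ is tree-like'' and must then verify that the collapsed path is well defined, continuous, of finite $p$-variation (this is where super-additivity of $\omega_{\X,p}$ enters) and tree-reduced; none of these verifications appears. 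For uniqueness, the claim that the two injective signature paths ``have the same image'' is precisely what needs proof --- it, together with the continuity and monotonicity of the induced time change, has to be extracted from the tree-like factorisation of $\tilde\X\star\overleftarrow{\tilde\Y}$ and is not a consequence of injectivity alone.
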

The space of unparameterised paths provides an object on which the signature map is injective.
\begin{definition}[Unparameterised Path Space]
    The space of unparameterised rough paths $\CC_p$ is defined as the quotient space $WG\Omega_p/\sim_\tau$. The quotient map is denoted by $\pi$.
\end{definition}
We use $[\X]$ to denote the tree-like equivalence class of a path $\X$, so that $\pi(\X)=[\X]$. We note that for $p>1$, if $\X$ is the tree reduced representative of an equivalence class, then it satisfies \cite{BGLY}
\[
\norm{\X}_{\text{p-var}}=\min_{\Y\in [\X]}\norm{\Y}_{\text{p-var}},
\]
but is not necessarily the unique path that attains the minimum. Consider the following example found in \cite{Driver_Notes}.
\begin{example}
    Consider $V\equiv \R^2$, $p\in (1,2)$ and $\varepsilon>0$ suitably small. Let $(e_i)_{i=1}^2$ be the Cartesian basis of $V$ and define
    \[
    \gamma = \gamma_{(1+\varepsilon) e_1}\star\gamma_{-2\varepsilon e_1}\star \gamma_{(1+\varepsilon) e_1}.
    \]
    Then clearly $\gamma$ is not tree-reduced in the sense of \cref{def: tree_equivalence} and in the equivalence class $[\gamma_{2e_1}]$, but one may compute $\norm{\gamma}_{p\text{-var}}=\norm{\gamma_{2e_1}}_{p\text{-var}}=2$.
\end{example}
However, if $p=1$, the tree-reduced representative will be the unique minimiser of length in each equivalence class \cite{HL}. Examples of tree-reduced paths in $WG\Omega_1(V)$ include piecewise linear paths $\gamma_{v_1}\star\dots\star\gamma_{v_n}$ satisfying $\cos\big((<_{\theta}(v_i,v_{i+1})\big)\neq-1$ for all $i=1,\dots,n-1$ \cite{LX}. Such piecewise linear paths are called irreducible. In particular, axis paths are tree-reduced with an alternative proof provided in \cite{CT_topologies}. 
\subsection{Topologies on Unparameterised Geometric Rough Path Space}
\begin{definition}
    Fix $p\geq 1$. We consider the following topologies on unparameterised path space.
    \begin{enumerate}[label=\arabic*)]
        \item Let $\mathscr{m}$ be any metric on $\CC_p$ for which $\SS:WG\Omega_p\to (\CC_p, \mathscr{m})$ is continuous. Let $\chi_\mathscr{m}$ denote the topology on $\CC_p$ induced by $\mathscr{m}$.
        \item The quotient topology, $\chi_\tau$, on $\CC_p$ inherited from the underlying topology on $WG\Omega_p(V)$.
        \item The metric topology $\chi_\mathscr{d}$ defined by the metric
        \begin{equation}
            \mathscr{d}([\X], [\Y])\coloneqq d_{\text{p-var}}(\X^\star,\Y^\star),
        \end{equation}
        where $\X^\star$ is the H\"older-control parameterisation of the unique tree-reduced representative of $[\X]$.
    \end{enumerate}
\end{definition}
An example topology of the first type is the product topology on $T((V))$. By the continuity of the Lyons' lift to each level, the continuity of the evaluation map and the definition of convergence in a product topology, it follows that $\SS:\Gp\to T((V))$ is a continuous map.
\begin{remark}\label{rem: metric_subspace}
    An alternative way to define the metric topology is as the induced topology via the injective map $\CC_p\hookrightarrow\WGp$ for $p>1$, and $\CC_1\hookrightarrow C^{0,1-\text{var}}\big([0,T],V\big)$ for $p=1$, which takes an equivalence class $[\X]$ to the path $\X^\star$.
\end{remark}
\begin{remark}\label{rem: alternative_metric}
    A possible alternative metric between equivalence classes is defined by
    \begin{equation}
            d_\star([\X], [\Y])\coloneqq \big\|(\X \star \overleftarrow{\Y})^\star\big\|_{\text{p-var}},
    \end{equation}
    This metric is clearly symmetric and satisfies the triangle inequality since
    \begin{align*}
    \norm{(\X \star \overleftarrow{\Y})^\star}_{\text{p-var}}&=\norm{\Big((\X \star \overleftarrow{\Z})^\star\star (\Z \star \overleftarrow{\Y})^\star\Big)^\star}_{\text{p-var}}\\
    &\leq \norm{(\X \star \overleftarrow{\Z})^\star\star (\Z \star \overleftarrow{\Y})^\star}_{\text{p-var}}\\
    & \leq \big\|(\X \star \overleftarrow{\Z})^\star\big\|_{\text{p-var}} + \big\|(\Z \star \overleftarrow{\Y}^\star\big\|_{\text{p-var}}\\
    &=  d_\star([\X], [\Z]) +  d_\star([\Z], [\Y]),
    \end{align*}
     where the second inequality follows from the sub-additivity of $p$-variation. The resulting topology is limited in practical use however, since it fails to even be separable. Indeed, consider the set of canonical lifts of the $V$-valued paths $K=\big\{\tfrac{t}{T}v:\norm{v}=1\big\}$ into $G\Omega_p(V)$. For $\X^{\leq \floor{p}}\in K$, the definition of the $p$-variation norm and \cref{prop: p_var_incr_n} implies that
    \[
    \big\|\X^{\leq \floor{p}}\big\|_{p\text{-var}}^p\geq \big\|\pi_1\big(\X_{1}-\X_{0}\big)\big\|^p= \norm{v}^p=1.
    \]
    Given two distinct paths $\X,\Y\in K$ the path $\big(\X\star\overleftarrow{\Y}\big)^{\leq 1}$ is a piecewise linear irreducible path and hence tree-reduced, so that the path $\big(\X\star\overleftarrow{\Y}\big)^{\leq \floor{p}}$ is also tree-reduced. The distance between these paths is then lower bounded by
    \begin{align*}
        d_\star\big(\big[\X^{\leq \floor{p}}\big],\big[\Y^{\leq \floor{p}}\big]\big)^p&=\big\|\big(\X\star\overleftarrow{\Y})^{\leq p}\|_{\text{p-var}}^p\\
        &\geq\big\|\X^{\leq \floor{p}}\big\|_{p\text{-var}}^p+\big\|\Y^{\leq \floor{p}}\big\|_{p\text{-var}}^p\\
        &\geq2,
    \end{align*}
    by the super-additivity of controls. Since $K$ is an uncountable set, the topology induced by this metric cannot be separable.
\end{remark}
\section{Topological Properties}\label{sec: results}
The topologies we defined in the preceding are ordered as follows.
\begin{proposition}\label{prop: inclusions}
    We have the inclusions
    \begin{equation}\label{eq: inclusions}
        \chi_{\mathscr{m}}\subseteq \chi_\tau\subseteq \chi_{\mathscr{d}}.
    \end{equation}
\end{proposition}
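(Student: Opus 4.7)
The inclusions follow from two complementary observations about the topologies involved: one invoking the universal property of the quotient topology, the other exhibiting a continuous section $\CC_p \to WG\Omega_p$.

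For the first inclusion $\chi_{\mathscr{m}} \subseteq \chi_\tau$, my plan is to identify the signature of a path with its tree-like equivalence class (which is legitimate by the BGLY uniqueness theorem), so that the map $\SS : WG\Omega_p \to (\CC_p, \mathscr{m})$ appearing in the definition of $\chi_{\mathscr{m}}$ coincides with the quotient map $\pi$. By hypothesis this map is continuous. Since the quotient topology $\chi_\tau$ is by construction the finest topology on $\CC_p$ making $\pi$ continuous, any other topology sharing this property — in particular $\chi_{\mathscr{m}}$ — must be coarser, giving the first inclusion.

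For the second inclusion $\chi_\tau \subseteq \chi_{\mathscr{d}}$, I would introduce the ``tree-reduced H\"older-control section'' $\tau : \CC_p \to WG\Omega_p$ sending each class $[\X]$ to the canonical representative $\X^\star$ used in the definition of $\mathscr{d}$. By the very definition of $\mathscr{d}$, this map is an isometric embedding of $(\CC_p, \mathscr{d})$ into $(WG\Omega_p, d_{\text{p-var}})$, and is therefore continuous. Moreover $\pi \circ \tau = \mathrm{id}_{\CC_p}$. Writing the identity map
\[
\mathrm{id}_{\CC_p} : (\CC_p, \chi_{\mathscr{d}}) \longrightarrow (\CC_p, \chi_\tau)
\]
as the composite $\pi \circ \tau$, and using that $\pi$ is $\chi_\tau$-continuous (again by the definition of the quotient topology), we see the identity is continuous. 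This is precisely $\chi_\tau \subseteq \chi_{\mathscr{d}}$.

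There is no genuine technical obstacle: the proof is almost purely formal once one has the right categorical perspective. The one ingredient on which everything rests is the existence and well-definedness of the section $\tau$ — i.e.\ the fact that every equivalence class admits a unique tree-reduced representative (up to reparametrisation), which is guaranteed by the BGLY result quoted earlier, together with the canonical H\"older-control reparametrisation of \cref{prop: constant_control}. Given these inputs, both inclusions reduce to standard manipulations with quotient and pulled-back metric topologies.
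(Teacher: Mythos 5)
Your proof is correct and follows essentially the same route as the paper: the first inclusion is the universal property of the quotient topology applied to the identification $\SS=\pi$, exactly as in the paper, and the second inclusion via the isometric section $[\X]\mapsto\X^\star$ is precisely the argument the paper outsources to its reference and which underlies \cref{rem: metric_subspace}. No gaps.
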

\begin{proof}
    By continuity of $\SS$ on $(\CC_p,\chi_\tau)$ and the definition of $\chi_{\mathscr{m}}$, it follows immediately that $\chi_{\mathscr{m}}\subseteq \chi_\tau$. The second inclusion follows analogously to \cite[Proposition 3.3]{CT_topologies}.
\end{proof}
The above inclusions are in fact strict and may be shown directly as in \cite{CT_topologies} in the case $p=1$. For brevity, however, we will leave the strictness as a corollary of \cref{thm: metrisability}.
\begin{corollary}\label{cor: hausdorff}
    All three topologies are Hausdorff.
\end{corollary}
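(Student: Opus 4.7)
The plan rests on \cref{prop: inclusions}, which places $\chi_{\mathscr{m}}$ at the bottom of the chain $\chi_{\mathscr{m}}\subseteq \chi_\tau \subseteq \chi_{\mathscr{d}}$, together with the elementary principle that any topology refining a Hausdorff topology is itself Hausdorff. The topologies $\chi_{\mathscr{m}}$ and $\chi_{\mathscr{d}}$ are by construction induced by genuine metrics on the set $\CC_p$, so they are automatically Hausdorff; the fact that $\mathscr{m}$ and $\mathscr{d}$ are genuine metrics (rather than pseudometrics) comes ultimately from the BGLY uniqueness theorem, which guarantees that distinct equivalence classes are actually separated by the signature and by the tree-reduced representative respectively.

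For the quotient topology $\chi_\tau$, I would present two complementary arguments. The shortest is to invoke $\chi_{\mathscr{m}}\subseteq \chi_\tau$: since every open set separating two points in $\chi_{\mathscr{m}}$ remains open in the finer topology $\chi_\tau$, the separation property lifts verbatim. A more direct route, which does not depend on picking a particular member of the class $\chi_{\mathscr{m}}$, goes as follows. Equip $T((V))$ with the product topology; each level map $\pi_n\circ\SS:WG\Omega_p\to V^{\otimes n}$ is continuous by Lyons' extension theorem, so $\SS$ is continuous into $T((V))$. Since $\SS$ is constant on tree-like equivalence classes, the universal property of the quotient topology yields a continuous factorisation $\tilde{\SS}:(\CC_p,\chi_\tau)\to T((V))$. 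By the BGLY theorem this map is injective, and since $T((V))$ is a countable product of finite-dimensional normed spaces it is Hausdorff; a continuous injection into a Hausdorff space pulls back disjoint neighbourhoods to disjoint neighbourhoods, so $(\CC_p,\chi_\tau)$ is Hausdorff.

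There is essentially no analytic obstacle here; the corollary is a packaging statement. The only mild subtlety worth flagging is that the class $\chi_{\mathscr{m}}$ is non-vacuous: the pullback of the product topology on $T((V))$ along $\tilde{\SS}$ is metrisable (countable products of metrisable spaces are metrisable) and the corresponding metric makes $\SS:WG\Omega_p\to(\CC_p,\mathscr{m})$ continuous, so the direct argument for $\chi_\tau$ above simultaneously exhibits a concrete metric of the type required by the first clause.
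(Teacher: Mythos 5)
Your proof is correct and follows essentially the same route as the paper: the paper likewise observes that $\chi_{\mathscr{m}}$ is Hausdorff by construction (being metric) and lets the chain $\chi_{\mathscr{m}}\subseteq\chi_\tau\subseteq\chi_{\mathscr{d}}$ of \cref{prop: inclusions} carry the separation property to the two finer topologies. Your additional remarks (the direct factorisation argument for $\chi_\tau$ and the non-vacuousness of the class $\chi_{\mathscr{m}}$ via the product topology on $T((V))$) are sound but not needed; the latter point is already made in the paper just after the definition of the topologies.
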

\begin{proof}
    Since $\chi_{\mathscr{m}}$ is a Hausdorff topology by construction, we immediately obtain that all three are Hausdorff.
\end{proof}
From this point forwards we will assume that $\chi_{\mathscr{m}}$ satisfies the following assumption.
\begin{assumption}\label{ass: pq_cty}
    There exists some $q\in (\floor{p},\floor{p}+1)$ for which $\SS:\Big(\WGp, d^{(\floor{p})}_{\text{q-var}}\Big)\to \big(\CC_p,\chi_{\mathscr{m}}\big)$ is continuous.
\end{assumption}
Our study of the finer topological properties of $\chi_{\mathscr{m}}$ relies heavily on the fact that under \cref{ass: pq_cty}, open sets must contain tree-reduced paths of arbitrarily large $p$-variation.
\begin{lemma}\label{lem: unbounded_balls}
    Suppose $\chi_{\mathscr{m}}$ satisfies \cref{ass: pq_cty}, then non-trivial open sets are unbounded in $p$-variation in the sense that
    \begin{equation}\label{eq: unbounded_pvar}
        \sup_{[\X]\in U}\big\|\X^\star\big\|_{\text{p-var}}=\infty,
    \end{equation}
    for any non-empty open $U$.
\end{lemma}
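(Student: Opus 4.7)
The overall strategy is to show, for any $[\X_0]\in U$, that one may construct a sequence of tree-like equivalence classes $([\X_n])_{n\geq 1}\subseteq U$ whose tree-reduced representatives satisfy $\big\|\X_n^\star\big\|_{\text{p-var}}\to\infty$. By \cref{ass: pq_cty}, there exists $q\in(\floor{p},\floor{p}+1)$ with $q>p$ for which the quotient map $\pi\colon\big(\WGp,d^{(\floor{p})}_{\text{q-var}}\big)\to \big(\CC_p,\chi_{\mathscr{m}}\big)$ is continuous; hence $\pi^{-1}(U)$ is $d^{(\floor{p})}_{\text{q-var}}$-open. It therefore suffices to exhibit $\X_n\in \WGp$ converging to a reparameterisation of $\X_0$ in $d^{(\floor{p})}_{\text{q-var}}$ and whose tree-reduced representatives have unbounded $p$-variation.

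For the perturbations I would take $\Y_n$ to be the axis path in $V$ comprising $n$ mutually orthogonal segments, each of length $\ell_n=n^{-\alpha}$, where $\alpha\in(1/q,1/p)$; this interval is non-empty because $q>p$. A direct computation at level one gives $\big\|\Y_n\big\|_{\text{q-var}}=n^{1/q-\alpha}\to 0$ and $\big\|\Y_n\big\|_{\text{p-var}}=n^{1/p-\alpha}\to \infty$. The analogous scaling persists at each higher level $i\leq \floor{p}$ of the Lyons' lift: the $i$-th iterated integral over a single segment has norm of order $\ell_n^i$, so summing over the $n$ segments contributes a term of order $n^{i(1/q-\alpha)}\to 0$ to the $d^{(\floor{p})}_{\text{q-var}}$-norm. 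Since axis paths are tree-reduced in $\WGp$ (see the discussion following \cref{def: tree_equivalence}), we have $\Y_n^\star=\Y_n$ and so $\big\|\Y_n^\star\big\|_{\text{p-var}}\to\infty$.

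Next, let $\X_n\coloneqq \X_0\star \Y_n$, parameterised so that $\X_0$ occupies $[0,T/2]$ and $\Y_n$ occupies $[T/2,T]$, and let $\tilde\X_0$ be the same parameterisation of $\X_0$ on $[0,T/2]$ extended as the constant path on $[T/2,T]$. The increments of $\X_n$ and $\tilde\X_0$ agree on $[0,T/2]$ and differ on $[T/2,T]$ only through the contribution of $\Y_n$; separating partition intervals contained in either half from those that straddle $T/2$ yields an estimate of the form $d^{(\floor{p})}_{\text{q-var}}(\X_n,\tilde\X_0)\leq C_{\X_0}\big\|\Y_n\big\|_{\text{q-var}}\to 0$ for a constant depending only on $\X_0$. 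Since $[\tilde\X_0]=[\X_0]\in U$, continuity of $\pi$ forces $[\X_n]\in U$ for all sufficiently large $n$.

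The final step, and the main obstacle, is the lower bound on $\big\|\X_n^\star\big\|_{\text{p-var}}$. Here I would exploit the group structure: since $\overleftarrow{\X_0}\star \X_0$ is tree-like, one has
\[
\big[\overleftarrow{\X_0}\star \X_n^\star\big]=\big[\overleftarrow{\X_0}\star \X_n\big]=\big[\overleftarrow{\X_0}\star \X_0\star \Y_n\big]=[\Y_n].
\]
As $\Y_n$ is itself the tree-reduced representative of $[\Y_n]$ and therefore minimises $p$-variation within its equivalence class, and since $p$-variation is sub-additive under concatenation and invariant under reversal,
\[
\big\|\Y_n\big\|_{\text{p-var}}\leq \big\|\overleftarrow{\X_0}\star \X_n^\star\big\|_{\text{p-var}}\leq \big\|\X_0\big\|_{\text{p-var}}+\big\|\X_n^\star\big\|_{\text{p-var}}.
\]
Rearranging yields $\big\|\X_n^\star\big\|_{\text{p-var}}\geq \big\|\Y_n\big\|_{\text{p-var}}-\big\|\X_0\big\|_{\text{p-var}}\to\infty$, completing the argument. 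The technical burden lies mostly in verifying the level-by-level scaling of $\Y_n$ and in the concatenation estimate; both are immediate for $\floor{p}=1$ and proceed via standard iterated-integral bounds for $\floor{p}\geq 2$.
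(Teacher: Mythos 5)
Your overall architecture is sound and genuinely different from the paper's. The paper first uses \cref{ass: pq_cty} and density to replace $[\X_0]$ by the class of a piecewise linear path whose lift lies in $\pi^{-1}(U)$, then appends a single long out-and-back excursion of length $n$ and perturbs it by an $\varepsilon$-sized orthogonal kick to make it irreducible, choosing $\varepsilon=\varepsilon(n)$ small so that the perturbed class stays in $U$; tree-reducedness is then read off directly from irreducibility. You instead keep $\X_0$ arbitrary, append a perturbation $\Y_n$ that is simultaneously small in $q$-variation and large in $p$-variation, and recover the lower bound on $\norm{\X_n^\star}_{\text{p-var}}$ indirectly via Chen's identity, sub-additivity of concatenation under reversal, and the fact that the tree-reduced representative minimises $p$-variation in its class. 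That last step is a genuinely useful device: it sidesteps having to verify that $\X_0\star\Y_n$ is itself tree-reduced, which is what allows you to avoid the reduction to piecewise linear $\X_0$ altogether.

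There is, however, a concrete gap in the construction of $\Y_n$. First, $V$ is finite-dimensional here, so a path with $n$ \emph{mutually} orthogonal segments does not exist once $n>\dim V$; axis paths only require \emph{consecutive} segments to be orthogonal. Second, and more seriously, even granting such a path the claimed bound $\norm{\Y_n}_{\text{q-var}}=n^{1/q-\alpha}\to 0$ fails unless the path returns to (near) its starting point: the trivial partition already gives $\norm{\Y_n}_{\text{q-var}}\geq\norm{\Y_n(T)-\Y_n(0)}$, which for $n$ mutually orthogonal steps of length $n^{-\alpha}$ equals $n^{1/2-\alpha}$ and does not vanish when $p\geq 2$ (since then $\alpha<1/p\leq 1/2$), while for a staircase alternating between two fixed directions it is of order $n^{1-\alpha}\to\infty$. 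The repair is to take $\Y_n$ to be a \emph{closed} axis loop, e.g.\ the boundary of a square of side $n^{-\alpha}$ traversed $n/4$ times: consecutive segments are orthogonal, so the path is irreducible and hence tree-reduced (and so is its lift); its increment over any subinterval has norm at most $2n^{-\alpha}$, which gives $\norm{\Y_n}_{\text{q-var}}\leq C n^{1/q-\alpha}\to 0$ together with the analogous decay at higher levels of the lift, while the corner partition still gives $\norm{\Y_n}_{\text{p-var}}\geq n^{1/p-\alpha}\to\infty$. With that replacement, and with the concatenation estimate $d^{(\floor{p})}_{\text{q-var}}(\X_0\star\Y_n,\tilde\X_0)\leq C_{\X_0}\norm{\Y_n}_{\text{q-var}}$ justified by the standard local Lipschitz continuity of concatenation (valid once $\norm{\Y_n}_{\text{q-var}}\leq 1$, so that higher powers are dominated), your argument goes through and yields the lemma.
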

\begin{proof}
Let $U\in\chi_{\mathscr{m}}$ be an open neighbourhood of some point $[\X]\in\CC_p$. By \cref{ass: pq_cty,lem: pq_var,lem: pw_density}, $\pi^{-1}(U)$ contains the lift of some piecewise linear path path parameterised at constant speed. With abuse of notation, we also call this path $\X^{\leq 1}$. We now follow the argument of \cite{CT_topologies}. Let $\{v_1,v_2\}$ be two orthonormal vectors not collinear to the last linear piece of $\X^{\leq 1}$. For every $\varepsilon>0$, define the sequences of paths $\big(\Y_{n,\varepsilon}^{\leq 1}\big)_{n=1}^\infty$ and $\big(\Z_{n,\varepsilon}^{\leq 1}\big)_{n=1}^\infty$ by
\begin{align}
    \Y_{n,\varepsilon}^{\leq 1}&=\X^{\leq 1}\star\gamma_{(n+\varepsilon)v_1}\star\gamma_{-(n+\varepsilon)v_1},\\
    \Z_{n,\varepsilon}^{\leq 1}&=\X^{\leq 1}\star\gamma_{\varepsilon v_2}\star\gamma_{nv_1}\star\gamma_{-\varepsilon v_2}\star\gamma_{-nv_1},
\end{align}
both parameterised at constant $1$-variation. For every $n,\varepsilon$, it holds that $\Y_{n,\varepsilon}^{\leq \floor{p}}\in[\X]\subseteq\pi^{-1}(U)$, whereas $\Z_{n,\varepsilon}^{\leq 1}$ is tree-reduced by construction. Additionally,
\[
d_{1\text{-var}}\big(\X_{n,\varepsilon}^{\leq 1}, \Z_{n,\varepsilon}^{\leq 1})\leq 4\varepsilon,
\]
so that $\Z_{n,\varepsilon}^{\leq \floor{p}}\in\pi^{-1}(U)$ for every $n$ and some $\varepsilon$ by \cref{lem: pq_var} and continuity of the signature map $\SS:\WGp\to (\CC_p,\chi_{\mathscr{m}})$. Finally, notice that
\begin{align*}
    \big\|\Z_{n,\varepsilon}^{\leq \floor{p}}\big\|_{\text{p-var}}^p&\geq \big\|\Z_{n,\varepsilon}^{\leq 1}\big\|_{\text{p-var}}^p\\
    &\geq \big\|\X^{\leq 1}\|_{\text{p-var}}^p+2\varepsilon^p+2n^p.
\end{align*}
\end{proof}
\begin{proposition}\label{prop: compact_balls}
    Suppose that $\chi_{\mathscr{m}}$ satisfies \cref{ass: pq_cty}, then the balls $B_p(r)\coloneqq\{[\gamma]:\norm{\gamma^\star}_{\text{p-var}}\leq r\}$ are compact in $\chi_{\mathscr{m}}$.
\end{proposition}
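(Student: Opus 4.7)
Since $\chi_{\mathscr{m}}$ is metrisable, it suffices to establish sequential compactness of $B_p(r)$. Given a sequence $\big([\gamma_n]\big)_{n\in\N}\subset B_p(r)$, I would pass to the H\"older-control parameterisation $\gamma_n^\star$ of the unique tree-reduced representative of each class. By \cref{prop: constant_control}, each $\gamma_n^\star$ is $\frac{1}{p}$-H\"older continuous with H\"older norm bounded by $r/T^{1/p}$, so the family is uniformly equicontinuous and takes values in the closed Carnot--Carath\'eodory ball of radius $r$ around $\mathbf{1}$ in $G^{(\floor{p})}$. This ball is compact, so Arzel\`a--Ascoli furnishes a subsequence $\gamma_{n_k}^\star$ converging uniformly on $[0,T]$ to a continuous path $\X:[0,T]\to G^{(\floor{p})}$ starting at $\mathbf{1}$.

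The next step is to upgrade this uniform convergence to convergence in $d^{(\floor{p})}_{q\text{-var}}$ for the $q\in(\floor{p},\floor{p}+1)$ provided by \cref{ass: pq_cty}. First, lower semi-continuity of $p$-variation under pointwise convergence gives
\[
\norm{\X}_{\text{p-var}}\leq \liminf_{k\to\infty}\big\|\gamma_{n_k}^\star\big\|_{\text{p-var}}\leq r,
\]
so $\X\in\WGp$. Combined with the uniform $p$-variation bound, the standard rough-path interpolation estimate (see, e.g., \cite[Proposition 8.17]{FV}) promotes uniform convergence of $\gamma_{n_k}^\star$ to $\X$ to convergence in the $q$-variation metric for any $q>p$. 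Invoking \cref{ass: pq_cty}, we then deduce that $[\gamma_{n_k}]=[\gamma_{n_k}^\star]\to [\X]$ in $\chi_{\mathscr{m}}$.

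It remains to confirm $[\X]\in B_p(r)$, i.e.\ that the tree-reduced representative $\X^\star$ also satisfies $\norm{\X^\star}_{\text{p-var}}\leq r$. This follows directly from the minimality property $\norm{\X^\star}_{\text{p-var}}=\min_{\Y\in[\X]}\norm{\Y}_{\text{p-var}}\leq\norm{\X}_{\text{p-var}}$ quoted from \cite{BGLY} in the preceding discussion. I expect the main technical step to be the interpolation estimate: the H\"older-control parameterisation is precisely what yields the equicontinuity needed for Arzel\`a--Ascoli, but it is only through the interpolation between uniform and $p$-variation metrics that one can activate the $q$-variation continuity built into \cref{ass: pq_cty}, which in turn is exactly the mechanism that transfers compactness from the underlying path space to the quotient under $\chi_{\mathscr{m}}$.
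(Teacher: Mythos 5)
Your proposal is correct and follows essentially the same route as the paper: metrisability reduces to sequential compactness, the H\"older-control parameterisation of the tree-reduced representatives gives equicontinuity, \cite[Proposition 8.17]{FV} upgrades to $q$-variation convergence, lower semi-continuity bounds the limit's $p$-variation, and \cref{ass: pq_cty} transfers the convergence to $\chi_{\mathscr{m}}$. Your explicit appeal to the minimality of $\norm{\X^\star}_{\text{p-var}}$ to place $[\X]$ in $B_p(r)$ is a point the paper leaves implicit, but it is the same argument.
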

The proof follows \cite[Proposition 4.2]{CT_topologies} and \cite[Proposition 5.7]{CL}.
\begin{proof}
    Since $\chi_{\mathscr{m}}$ is a metrisable topology, it is enough to show that $B_p(r)$ is sequentially compact. Let $\big([\X_n]\big)_{n=1}^\infty$ be a sequence in $B_p(r)$, then each tree-reduced representative $\X_n^\star$ is $\frac{1}{p}-$H\"older continuous with $\frac{1}{p}$-H\"older norm bounded by $r$. Consequently, the sequence $\big(\X_n^\star\big)_{n=1}^\infty\subset \WGp$ is equicontinuous and bounded, with uniformly bounded $p$-variation. And so, for any $p<q<\floor{p}+1$, a subsequence converges in $WG\Omega_q(V)$ to some path $\X\in \WGp$, see \cite[Proposition 8.17]{FV}. By \cite[Lemma 5.12]{FV}, $\norm{\X}_{\text{p-var}}\leq r$ so that $[\X]\in B_p(r)$. Finally, by \cref{ass: pq_cty}, we have the convergence $[\X_n]\to [\X]$ in $\chi_{\mathscr{m}}$.
\end{proof}
The preceding results show that we can construct large sets that are nowhere dense in the topology $\chi_m$. Taking a countable union of these closed sets allows us to conclude several topological properties.
\begin{corollary}\label{cor: baire_space}
    Let $\chi_{\mathscr{m}}$ satisfy \cref{ass: pq_cty}, then the space $(\CC_p,\chi_{\mathscr{m}})$ is 
    \begin{enumerate}[label=\arabic*)]
        \item Separable;
        \item $\sigma$-compact;
        \item A Lusin space;
        \item Not a Baire space;
        \item Not locally compact.
    \end{enumerate}
\end{corollary}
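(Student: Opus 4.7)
The strategy is to combine the two structural facts established above: \cref{prop: compact_balls} makes each ball $B_p(n)$ compact (in particular a separable metric subspace), while \cref{lem: unbounded_balls} makes each such ball nowhere dense in $\chi_{\mathscr{m}}$. The decomposition $\CC_p=\bigcup_{n\in\N} B_p(n)$ then immediately yields $\sigma$-compactness, and separability follows by taking a countable union of countable dense subsets of the $B_p(n)$ and noting that any non-empty open $U\subseteq\CC_p$ intersects some $B_p(n)$ (since each equivalence class lies in some ball).

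For the Lusin property, I would set $A_1=B_p(1)$ and $A_n=B_p(n)\setminus B_p(n-1)$ for $n\geq 2$. Since $B_p(n-1)$ is a compact subset of the Hausdorff space $B_p(n)$ it is closed there, so each $A_n$ is open in the compact metric space $B_p(n)$ and hence Polish. The topological coproduct $P=\bigsqcup_{n\geq 1} A_n$ is then Polish, and because the $A_n$ partition $\CC_p$, the natural map $P\to (\CC_p,\chi_{\mathscr{m}})$ is a continuous bijection, identifying $(\CC_p,\chi_{\mathscr{m}})$ as a Lusin space in the sense of Schwartz.

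For the failure of Baireness, the key input is \cref{lem: unbounded_balls}: any non-empty open $U\subseteq\CC_p$ satisfies $\sup_{[\X]\in U}\norm{\X^\star}_{\text{p-var}}=\infty$, and so $U$ cannot be contained in any $B_p(n)$. Thus each $B_p(n)$, already closed by compactness in the Hausdorff space $\CC_p$, has empty interior and is therefore nowhere dense. Writing $\CC_p=\bigcup_n B_p(n)$ then realises $\CC_p$ as a countable union of nowhere-dense sets, i.e.\ as meager in itself, which rules out the Baire property. Since every locally compact Hausdorff space is Baire by the classical Baire category theorem, the failure of the Baire property immediately forces the failure of local compactness.

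The step I expect to require the most care is the Lusin claim: one must pick the decomposition so that each piece is genuinely Polish (which uses that $B_p(n-1)$ is closed in $B_p(n)$, so that $A_n$ is open) and verify that the induced map from the coproduct is both continuous and bijective. The remaining four assertions are short consequences of the nowhere-density observation paired with the Baire category theorem for locally compact Hausdorff spaces.
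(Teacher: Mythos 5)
Your argument is correct and follows essentially the same route as the paper: the decomposition $\CC_p=\bigcup_{r} B_p(r)$ into balls that are compact by \cref{prop: compact_balls} and nowhere dense (closed with empty interior) by \cref{lem: unbounded_balls}, followed by the Baire category theorem for the last two items; your explicit coproduct construction for the Lusin property is just a spelled-out version of what the paper asserts from the same decomposition. The only divergence is separability, which the paper deduces from \cref{ass: pq_cty}, \cref{lem: pq_var} and the density of piecewise-linear lifts (\cref{lem: pw_density}), whereas you deduce it from separability of the compact metric balls $B_p(r)$; both arguments are valid and both ultimately rest on \cref{ass: pq_cty}.
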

\begin{proof}
    The first item follows from \cref{ass: pq_cty,lem: pq_var} and the separability result \cref{lem: pw_density}. We may write
    \begin{equation}\label{eq: compact_union}
        \CC_p = \bigcup_{r=1}^\infty B_p(r).
    \end{equation}
    Thus the Hausdorff space $(\CC_p,\chi_{\mathscr{m}})$ is a countable union of compact sets [cf. \cref{prop: compact_balls}] with empty interior [cf. \cref{lem: unbounded_balls}] and so $\sigma$-compact and a Lusin space, but not a Baire space. The final item follows from the Baire category theorem.
\end{proof}
We now state one of our main results.
\begin{theorem}[Complete Metrisability]\label{thm: metrisability}
Let $\chi_{\mathscr{m}}$ satisfy \cref{ass: pq_cty}, then
    \begin{enumerate}[label = \arabic*)]
        \item $\chi_{\mathscr{m}}$ is not completely-metrisable;
        \item $\chi_\tau$ is not metrisable;
        \item The metric generating $\chi_{\mathscr{d}}$ is not complete. When $p=1$, however, $\chi_{\mathscr{d}}$ is completely-metrisable.
    \end{enumerate}
\end{theorem}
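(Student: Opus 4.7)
The three claims are handled in turn.

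For claim (1), I would apply \cref{cor: baire_space} together with the Baire category theorem: the decomposition \cref{eq: compact_union} realises $(\CC_p,\chi_{\mathscr{m}})$ as a countable union of compact sets (\cref{prop: compact_balls}) with empty interior (\cref{lem: unbounded_balls}), so it is not a Baire space, and therefore cannot be completely metrisable.

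For claim (2), the plan is to follow the approach of \cite{CT_topologies} and show that $\chi_\tau$ fails first-countability at any $[\X]\in\CC_p$. Given a hypothetical countable neighbourhood basis $(U_n)_{n=1}^\infty$ at $[\X]$ in $\chi_\tau$, one uses \cref{lem: unbounded_balls} (together with the fact that $\chi_{\mathscr{m}}\subseteq\chi_\tau$, so each $U_n$ contains a $\chi_{\mathscr{m}}$-open neighbourhood that is unbounded in $p$-variation) to extract a sequence $([\Y_n])$ with $[\Y_n]\in U_n$, $[\Y_n]\neq [\X]$, and $\|\Y_n^\star\|_{\text{p-var}}\to\infty$. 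Setting $V\coloneqq \CC_p\setminus\{[\Y_n]\}_n$, the key observation is that $V$ is $\chi_\tau$-open: every representative in $\pi^{-1}([\Y_n])$ has $p$-variation at least $\|\Y_n^\star\|_{\text{p-var}}$, so any $p$-variation bounded subset of $\WGp$ meets only finitely many of the (closed) classes $\pi^{-1}([\Y_n])$, whence $\bigcup_n \pi^{-1}([\Y_n])$ is $p$-variation closed. Thus $V$ is a $\chi_\tau$-neighbourhood of $[\X]$ that contains none of the $U_n$, contradicting the basis property.

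For claim (3), by \cref{rem: metric_subspace}, completeness of $\chi_{\mathscr{d}}$ is equivalent to the image $\{\X^\star:[\X]\in\CC_p\}$ being closed in the Polish ambient space. To exhibit non-completeness for $p>1$, construct an explicit Cauchy sequence of tree-reduced H\"older-control paths whose $p$-variation limit is not tree-reduced. Concretely, in $V=\R^2$ take $\X_n$ piecewise-linear through $(0,0)$, $(1,1/n)$, $(0,2/n)$, each of which is tree-reduced by non-collinearity of consecutive segments and which converge in $p$-variation to the tree-like path $(0,0)\to(1,0)\to(0,0)$. If $([\X_n])$ converged in $\chi_{\mathscr{d}}$ to some $[\Y]$, uniqueness of $p$-variation limits would force $\Y^\star$ to equal this non-tree-reduced path, contradicting that $\Y^\star$ is tree-reduced.

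The $p=1$ complete metrisability is the main obstacle, and the plan is to show that constant-speed tree-reduced paths form a closed subset of $C^{0,1-\text{var}}([0,T],V)$. For $\gamma_n=\gamma_n^\star$ converging in $1$-variation to some $\gamma$, constant-speed transfers to the limit by passing through $\|\gamma_n\|_{1\text{-var};[0,t]}=\tfrac{t}{T}\|\gamma_n\|_{1\text{-var};[0,T]}$ using continuity of the $1$-variation norm. Tree-reducedness of $\gamma$ is the delicate step, and crucially uses the $p=1$-specific property (which fails for $p>1$, as in the introductory example) that the tree-reduced representative is the \emph{unique} length minimiser in its class: if $\gamma$ were not tree-reduced, strict unique minimality would give $\|\gamma^\star\|_{1\text{-var}}<\|\gamma\|_{1\text{-var}}=\lim\|\gamma_n\|_{1\text{-var}}$, and I would derive a contradiction via a stability argument for the tree-reduction map $\gamma\mapsto\gamma^\star$ in the $1$-variation topology, exploiting uniqueness to upgrade the obvious lower semicontinuity of $[\gamma]\mapsto\|\gamma^\star\|_{1\text{-var}}$ to the continuity required to conclude $\gamma=\gamma^\star$.
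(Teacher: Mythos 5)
Claims (1) and the non-completeness half of (3) are correct and essentially identical to the paper's arguments (Baire category for (1); a sequence of tree-reduced irreducible piecewise linear paths converging in $p$-variation to a tree-like path for (3)). The two remaining steps each contain a problem.

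For (2), your extraction step is justified by a backwards implication. From $\chi_{\mathscr{m}}\subseteq\chi_\tau$ you conclude that ``each $U_n$ contains a $\chi_{\mathscr{m}}$-open neighbourhood''; but the inclusion says that $\chi_\tau$ is \emph{finer}, so a $\chi_\tau$-neighbourhood need not contain any $\chi_{\mathscr{m}}$-open set, and \cref{lem: unbounded_balls} cannot be invoked. The fact you need --- that every $\chi_\tau$-neighbourhood $U_n$ of $[\X]$ contains classes distinct from $[\X]$ whose tree-reduced representatives have arbitrarily large $p$-variation --- is true, but must be proved directly as the paper does: $\pi^{-1}(U_n)$ is $d_{\text{p-var}}$-open and contains the \emph{entire} class $[\X]$, in particular reparameterisations of $\X\star\gamma_{nv_1}\star\gamma_{-nv_1}$ of arbitrarily large $p$-variation; small axis-path perturbations of these are tree-reduced, remain in $\pi^{-1}(U_n)$, and retain $p$-variation of order $n$. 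With that repair, your closedness argument for $\bigcup_n\pi^{-1}([\Y_n])$ (via minimality of the tree-reduced $p$-variation in each class) is sound, and your conclusion --- outright failure of first countability --- is in fact slightly sharper than the paper's ``not both first countable and regular''.

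The $p=1$ part of (3) rests on a false premise: the set of constant-speed tree-reduced paths is \emph{not} closed in $C^{0,1\text{-var}}\big([0,T],V\big)$. Your own $p>1$ counterexample, run at $p=1$, shows this: the constant-speed irreducible piecewise linear paths through $(0,0)$, $(1,1/n)$, $(0,2/n)$ are tree-reduced and converge in $1$-variation to the non-tree-reduced path $(0,0)\to(1,0)\to(0,0)$. For the same reason, the reduction map $\gamma\mapsto\gamma^\star$ and the length map $\gamma\mapsto\norm{\gamma^\star}_{1\text{-var}}$ are discontinuous at \emph{every} path that is not tree-reduced, so the ``stability argument'' you hope to exploit does not exist; unique length-minimality only yields continuity of $\gamma\mapsto\norm{\gamma^\star}_{1\text{-var}}$ \emph{at} tree-reduced points. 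The correct route (\cref{prop: chi_d_polish}) is to observe that the tree-reduced paths are precisely the continuity points of $\gamma\mapsto\norm{\gamma^\star}_{1\text{-var}}$, hence a $G_\delta$ set; intersecting with the closed (hence $G_\delta$) set of constant-speed paths realises the image of \cref{rem: metric_subspace} as a $G_\delta$ subset of the Polish space $C^{0,1\text{-var}}\big([0,T],V\big)$, which is completely metrisable by Alexandrov's theorem. Closedness is neither available nor needed.
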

We will leave the proof of complete metrisability of the metric topology for $p=1$ to \cref{prop: chi_d_polish}. We record here that complete metrisability of $\chi_\mathscr{d}$ for $p>1$ remains unresolved.
\begin{proof}
    The first item is an immediate consequence of \cref{cor: baire_space} and the Baire category theorem. For the second item we follow the strategy of \cite[Theorem 3.8]{CT_topologies} and show that $\chi_\tau$ cannot both be first countable and regular and so not metrisable. Assume that $\chi_\tau$ has a countable neighbourhood basis $\{U_n\}_{n=1}^\infty$ of $[o]\in\CC_p$ consisting of, with loss of generality, open sets. Fix two orthonormal vectors $\{v_1,v_2\}$ and consider the sequence of tree-like paths in $WG\Omega_1(V)$ given by
    \[
    \Y_n^{\leq 1}\coloneqq \gamma_{n v_1}\star\gamma_{-n v_1}=\gamma_{n v_1}\star\overleftarrow{\gamma_{nv_1}}.
    \]
    Then each $\Y_n^{\leq \floor{p}}\in [o]\in\CC_p$, since the lift of a tree-like path is tree-like. As such
    \[
    r_n\coloneqq \sup \big\{r>0: B_{\dpvar}\big(\Y^{\leq \floor{p}}; r\big)\subseteq U_n\big\}>0.
    \]
    Define the strictly decreasing sequence of positive real numbers $\{\delta_n\}_{n=1}^\infty$ by
    \[
    \delta_1=\min\{r_1,1\}\quad \text{and}\quad \delta_{n+1}=\frac{1}{2}\min\{\delta_n,r_{n+1}\},
    \]
    for $n\geq 1$ so that $\pi\Big(B_{\dpvar}\big(\Y_n^{\leq \floor{p}};\delta_n\big)\Big)\subseteq U_n$ for each $n$. For $\varepsilon>0$ let $\Z^{\leq 1}_{n,\varepsilon}$ be the path
    \[
    \Z^{\leq 1}_{n,\varepsilon}=\gamma_{\varepsilon v_2}\star\gamma_{(n-\varepsilon)v_1}\star\gamma_{-\varepsilon v_2}\star\gamma_{-(n-\varepsilon)v_1},
    \]
    so that $d_{1\text{-var}}\big(\Z^{\leq 1}_{n,\varepsilon}, \Y_n^{\leq 1}\big)\leq 4\varepsilon$. And so by \cref{lem: pq_var}
    \[
    \lim_{\varepsilon\to 0}\dpvar\big(\Z_{n,\varepsilon}^{\leq\floor{p}}, \Y_n^{\leq\floor p}\big)=0.
    \]
    It then follows that $\Z_{n,\varepsilon}^{\leq\floor{p}}\in B_{\dpvar}\big(\Y_n^{\leq \floor{p}};\delta_n\big)$ for suitably small $\varepsilon_n$, resulting in $\Big[\Z_{n,\varepsilon_n}^{\leq\floor{p}}\Big]\in U_n$. Clearly
    \[
    \Big\|\Z_{n,\varepsilon_n}^{\leq\floor{p}}\Big\|_{\text{p-var}}\uparrow \infty,
    \]
    meaning that \[
    A\coloneqq \bigcup_{n=1}^\infty A_n,\text{ with }A_n\coloneqq\bigcup_{j=1}^{n}\Big\{\Big[\Z_{j,\varepsilon_j}^{\leq\floor{p}}\Big]\Big\},
    \]
    is closed in $\chi_\tau$. To see this, note that if $(\W_n)_{n=1}^\infty\subset \pi^{-1}(A)$ converges to some $\W\in\WGp$, then $\sup_n\norm{\W_n}_{\text{p-var}}<\infty$. Using the fact that each $\Z_{n,\varepsilon_n}^{\leq\floor{p}}$ is tree-reduced with exploding $p$-variation, there exists some $N$ for which $(\W_n)_{n=1}^\infty$ is contained in $A_N$, a finite union of equivalence classes each of which is closed by \cref{cor: hausdorff} and hence $\W\in\pi^{-1}(A)$. We have then constructed a closed set $A$, not containing $[o]$, but which has non-empty intersection with every neighbourhood of $[o]$; $\chi_\tau$ cannot then be both first countable and regular and so is not metrisable.

    For the final item let $\{v_1,v_2\}$ be a pair of unit vectors in $V$ which are not collinear. For each $\tfrac{1}{2}>\varepsilon > 0$ define the following tree-reduced path
    \begin{equation}\label{eq: piecewise_linear}
        \X_\varepsilon^{\leq 1}=\gamma_{\varepsilon v_2}\star\gamma_{(\nicefrac{1}{2}-\varepsilon)v_1}\star\gamma_{-\varepsilon v_1}\star\gamma_{-(\nicefrac{1}{2}-\varepsilon )v_1}.
    \end{equation}
    Let $\X^{\leq \floor{p}}$ be the canonical lift into $C^{1\text{-var}}\big([0,T], G^{(\floor{p})}\big)\subset \WGp$ which is then parameterised at $\tfrac{1}{p}$-H\"older control. Let $(\varepsilon_n)_{n=1}^\infty$ be a sequence satisfying $0<\varepsilon_n<\tfrac{1}{2}$ and converges to $0$. The sequence $(\X^{\leq 1}_{\varepsilon_n})_{n=1}^{\infty}$ is clearly bounded in $1$-variation and equicontinuous by the H\"older control reparameterisation. Hence the sequence $(\X^{\leq 1}_{\varepsilon_n})_{n=1}^{\infty}$ converges uniformly (along a subsequence) to some path $\X^{\leq 1}$ of finite $1$-variation, and so converges in $q$-variation for $q\in (1,2)$ by a standard interpolation argument. By the form of \cref{eq: piecewise_linear} it is clear that $\X^{\leq 1}$ must be a reparameterisation of the path $\gamma_{v_1}\star\gamma_{-v_1}$. By \cref{lem: pq_var} we conclude that $\X_{\varepsilon_n}^{\leq \floor{p}}\to \X^{\leq \floor{p}}$ in $\WGp$, and so forms a Cauchy sequence with respect to the metric $\mathscr{d}$. By continuity of the quotient map, We have the convergence of equivalence classes $\big[\X_{\varepsilon_n}^{\leq \floor{p}}\big]\to \big[\X^{\leq \floor{p}}\big]=[o]$ in $\chi_\tau$. By the uniqueness of limits in Hausdorff spaces, the only possible limit of $\big[\X_{\varepsilon_n}^{\leq\floor{p}}\big]$ in $\chi_d$ is then $[o]$. However, it is clear that
    \[
    \lim_{n\to\infty}\mathscr{d}\big(\big[\X^{\leq \floor{p}}_{\varepsilon_n}\big], [o]\big)\geq 2^{\nicefrac{1}{p}-1},
    \]
    so that $\big[\X^{\leq \floor{p}}_{\varepsilon_n}\big]$ does not converge to $[o]$ with respect to the $\mathscr{d}$.

    \end{proof}
As a result of the non-metrisability of the quotient topology, the inclusions from \cref{prop: inclusions} are strict.
\begin{corollary}
    The inclusions in \cref{eq: inclusions} are strict. That is
    \[
    \chi_{\mathscr{m}}\subset\chi_\tau\subset\chi_{\mathscr{d}}.
    \]
\end{corollary}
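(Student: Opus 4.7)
The plan is to leverage \cref{thm: metrisability} in a purely formal way: the middle topology is not metrisable, whereas the two flanking topologies are. The first step is to observe that $\chi_{\mathscr{m}}$ is metrisable by its very construction (it is induced by the metric $\mathscr{m}$), and $\chi_{\mathscr{d}}$ is metrisable because it is induced by the metric $\mathscr{d}$.

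Next, for the first inclusion $\chi_{\mathscr{m}} \subseteq \chi_\tau$, I would argue by contradiction: if equality held, then $\chi_\tau$ would inherit metrisability from $\chi_{\mathscr{m}}$, directly contradicting item (2) of \cref{thm: metrisability}. Hence the inclusion must be strict. The same argument applies to the second inclusion $\chi_\tau \subseteq \chi_{\mathscr{d}}$: equality would make $\chi_\tau$ metrisable (as a homeomorphic copy of the metrisable space $\chi_{\mathscr{d}}$), again contradicting \cref{thm: metrisability}.

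There is no real obstacle here, since \cref{thm: metrisability} has already done the heavy lifting; the corollary is essentially a bookkeeping remark that the failure of metrisability of $\chi_\tau$ forces genuine strictness on both sides of \cref{eq: inclusions}. The only point worth being careful about is to note explicitly that \cref{ass: pq_cty} is in force (since the non-metrisability of $\chi_\tau$ was proved under this hypothesis, via the construction of the closed set $A$ accumulating at $[o]$).
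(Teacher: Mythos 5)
Your argument is exactly the paper's intended one: the corollary is stated immediately after the remark that the non-metrisability of $\chi_\tau$ (item (2) of \cref{thm: metrisability}) forces strictness, since $\chi_{\mathscr{m}}$ and $\chi_{\mathscr{d}}$ are metrisable by construction. Your additional care in noting that \cref{ass: pq_cty} must be in force is a fair observation, as the paper's non-metrisability proof indeed operates under that standing assumption.
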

The non-completeness of the metric topology has the following consequence.
\begin{corollary}
    The space $(\CC_p,\chi_{\mathscr{d}})$ is not locally compact.
\end{corollary}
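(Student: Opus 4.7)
My plan is to argue by contradiction, leveraging the non-completeness of $\mathscr{d}$ from \cref{thm: metrisability}. Suppose $[o]\in\CC_p$ had a compact $\chi_{\mathscr{d}}$-neighbourhood; since $\chi_{\mathscr{d}}$ is metrisable, I could extract some $\delta>0$ for which the closed ball $\bar B_{\mathscr{d}}([o],\delta)$ is compact, and thus $\mathscr{d}$-complete. The goal then reduces to exhibiting a $\mathscr{d}$-Cauchy sequence inside this ball that has no $\chi_{\mathscr{d}}$-limit.

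To produce such a sequence I would revisit the piecewise linear example of \cref{thm: metrisability}(3) after rescaling by the Carnot dilation $\delta_\alpha$: replace each straight piece $\gamma_v$ in the definition of $\X_\varepsilon^{\leq 1}$ by $\gamma_{\alpha v}$, obtaining tree-reduced H\"older-controlled lifts $\Y_n^{\leq\floor{p}}$. By homogeneity of the $p$-variation norm, $\|\Y_n\|_{\text{p-var}}\le \alpha C$ for a constant $C$ independent of $n$, so fixing $\alpha\le \delta/C$ places the entire sequence inside $\bar B_{\mathscr{d}}([o],\delta)$. Replaying the argument of \cref{thm: metrisability}(3) verbatim on this rescaled family yields convergence $\Y_n^{\leq\floor{p}}\to \Y^{\leq\floor{p}}$ in $\WGp$, where $\Y^{\leq\floor{p}}$ is the canonical lift of the tree-like path $\gamma_{\alpha v_1/2}\star\gamma_{-\alpha v_1/2}$; consequently $[\Y_n]\to[o]$ in $\chi_\tau$, while $\mathscr{d}([\Y_n],[o])=\|\Y_n\|_{\text{p-var}}$ remains bounded below by a positive constant (of order $\alpha\cdot 2^{\nicefrac{1}{p}-1}$).

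The contradiction then closes as follows: completeness of $\bar B_{\mathscr{d}}([o],\delta)$ forces the $\mathscr{d}$-Cauchy sequence $([\Y_n])$ to admit a $\chi_{\mathscr{d}}$-limit $[Y^\infty]$ in that ball; the inclusion $\chi_\tau\subseteq\chi_{\mathscr{d}}$ together with Hausdorffness of $\chi_\tau$ (\cref{cor: hausdorff}) then pin $[Y^\infty]=[o]$, contradicting the lower bound on $\mathscr{d}([\Y_n],[o])$.

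The one genuinely new piece of checking---and which I anticipate as the only, mild, obstacle---is that the dilation $v\mapsto\alpha v$ preserves tree-reducedness of the piecewise linear approximants and commutes with H\"older-control reparameterisation. Both facts should be immediate from the homogeneity of the construction: tree-reducedness of the rescaled corners is inherited from the original (the $\delta_\alpha$ action is a bijection of paths up to tree-like equivalence), and the H\"older-control time-change $\varphi$ depends only on the ratio $\omega_{\Y_n,p}(0,t)/\omega_{\Y_n,p}(0,T)$, which is invariant under dilation.
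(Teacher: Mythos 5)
Your proposal is correct and follows essentially the same route as the paper: the authors likewise rescale the non-completeness construction of \cref{thm: metrisability}(3) to show that the balls $B_p(r)$ fail to be complete for every $r>0$, and then conclude that a compact (hence complete) neighbourhood of $[o]$ cannot exist. Your explicit use of the Carnot dilation, and the checks that it preserves tree-reducedness and commutes with the H\"older-control reparameterisation, simply make precise what the paper leaves as ``the same techniques.''
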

\begin{proof}
    Using the same techniques as in \cref{thm: metrisability}, we can show that $B_p(r)$ is not complete for any $r$. As a result, $[o]$ cannot have a compact neighbourhood in this topology as this would imply $B_p(r)$ is compact for some suitably small $r$.
\end{proof}
We conclude this section by positively answering an open question from \cite{CT_topologies}: is the metric topology $\chi_{\mathscr{d}}$ Polish when $p=1$?
\begin{proposition}\label{prop: chi_d_polish}
    When $p=1$, the space $(\CC_1,\chi_{\mathscr{d}})$ is Polish.
\end{proposition}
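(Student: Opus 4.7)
My plan is to exhibit $(\CC_1, \mathscr{d})$ as isometric to a $G_\delta$ subset of a Polish space and then invoke Alexandrov's theorem. The ambient space will be $\bigl(C^{0,1\text{-var}}([0,T], V), d_{\text{1-var}}\bigr)$, which is Polish by \cref{lem: pw_density}. I would consider the map $\Phi \colon \CC_1 \to C^{0,1\text{-var}}([0,T], V)$ sending $[\X] \mapsto \X^\star$, the H\"older-control (equivalently, for $p=1$, constant-speed) parameterisation of the unique tree-reduced representative of $[\X]$. Since $\X^\star$ is Lipschitz it indeed lies in $C^{0,1\text{-var}}$, and by definition of $\mathscr{d}$ the map $\Phi$ is an isometric embedding onto its image $B \coloneqq \Phi(\CC_1)$. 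It therefore suffices to show $B$ is a $G_\delta$ subset of $C^{0,1\text{-var}}([0,T], V)$; Alexandrov's theorem then gives that $B$ is Polish, and the isometry $\Phi$ transports this to $(\CC_1, \mathscr{d})$.

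To show $B$ is $G_\delta$, I would decompose $B = \{o\} \cup B'$, where $B'$ denotes the non-trivial constant-speed tree-reduced paths. The singleton $\{o\}$ is closed (hence $G_\delta$), and I would express $B'$ as an intersection of three sets. The condition of constant speed is closed in $d_{\text{1-var}}$, since the equation $\|\gamma\|_{\text{1-var};[0,t]} = \tfrac{t}{T}\|\gamma\|_{\text{1-var};[0,T]}$ is preserved under $1$-variation limits; the condition of positive length is open. For tree-reducedness I would use the characterisation---via Chen's identity applied to the definition of tree-reduced as injectivity of the signature path---that $\gamma$ is tree-reduced if and only if $\SS_{s,t}(\gamma) \neq \mathbf{1}$ for every $0 \leq s < t \leq T$, and write this condition as a countable intersection $\bigcap_{n \geq 1} V_n$, with
\[
V_n \coloneqq \bigl\{\gamma \in C^{0,1\text{-var}}([0,T],V) : \SS_{s,t}(\gamma) \neq \mathbf{1} \text{ for all } (s,t) \in K_n\bigr\}
\]
and $K_n \coloneqq \{(s,t) : 0 \leq s,\ s + 1/n \leq t \leq T\}$. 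Since each $K_n$ is compact and $\bigcup_n K_n$ exhausts $\{(s,t): 0 \leq s < t \leq T\}$, the intersection $\bigcap_n V_n$ is exactly the set of tree-reduced paths.

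The main obstacle I anticipate is proving each $V_n$ is open. The argument uses compactness of $K_n$: for $\gamma \in V_n$, continuity of $(s,t) \mapsto \SS_{s,t}(\gamma)$ together with the fact that $\SS_{s,t}(\gamma) \neq \mathbf{1}$ on $K_n$ yields a uniform lower bound $\inf_{(s,t) \in K_n} d\bigl(\SS_{s,t}(\gamma), \mathbf{1}\bigr) \geq \delta > 0$. Joint continuity of $(\eta, s, t) \mapsto \SS_{s,t}(\eta)$---which follows from continuity of Lyons' lift as a function of the path, combined with Chen's identity $\SS_{s,t}(\eta) = \SS_{0,s}(\eta)^{-1}\SS_{0,t}(\eta)$---then propagates this uniform bound to a $1$-variation neighbourhood of $\gamma$, confirming $V_n$ is open. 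With this in hand, $B'$ is the intersection of a closed set, an open set, and a $G_\delta$-set, hence $G_\delta$; and $B = \{o\} \cup B'$ is $G_\delta$ as a finite union of $G_\delta$-sets, completing the proof. I would also note that this strategy leans crucially on the image $B$ lying in the separable Polish space $C^{0,1\text{-var}}$ (rather than the larger, potentially non-separable $C^{\text{1-var}}$)---a consequence of constant-speed parameterisations being Lipschitz when $p=1$---which is the structural feature that is missing for $p > 1$.
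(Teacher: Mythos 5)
Your proof is correct, and it shares the paper's overall skeleton (isometrically embed $(\CC_1,\mathscr{d})$ into the Polish space $C^{0,1\text{-var}}([0,T],V)$ via $[\X]\mapsto\X^\star$, show the image is $G_\delta$, apply Alexandrov), as well as the same treatment of the constant-speed condition as a closed set. Where you genuinely diverge is in proving that the tree-reduced paths form a $G_\delta$ set. The paper identifies them as precisely the continuity points of the functional $\gamma\mapsto\|\gamma^\star\|_{1\text{-var}}$, using the $p=1$ fact that the tree-reduced representative is the \emph{unique} length minimiser in its class, and then invokes the general fact that continuity points of a real-valued function form a $G_\delta$ set. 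You instead work directly from the definition of tree-reducedness as injectivity of $t\mapsto\SS_{0,t}(\gamma)$, rewrite it via Chen's identity as $\SS_{s,t}(\gamma)\neq\mathbf{1}$ for $s<t$, and exhibit this as a countable intersection of sets $V_n$ whose openness follows from a compactness/tube-lemma argument using joint continuity of $(\eta,s,t)\mapsto\SS_{s,t}(\eta)$ into $G^{(\star)}$ with the (metrisable) product topology; you also handle the degenerate class $[o]$ explicitly, which the paper glosses over. Your route costs a little more verification (the joint continuity and the uniform lower bound over $K_n$, both standard), but it buys independence from the length-minimisation characterisation --- exactly the ingredient the paper flags as failing for $p>1$. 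Indeed, your $G_\delta$ argument for tree-reducedness would survive for $p>1$; as you correctly observe, the remaining obstruction there is locating the image $\{\X^\star\}$ inside a separable completely metrisable ambient space (and indeed separability of $\chi_{\mathscr{d}}$ itself is open for $p>1$), so your closing structural remark is well placed.
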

\begin{proof}
    Since $\chi_{\mathscr{d}}$ is a separable topology when $p=1$ \cite{CT_topologies}, it remains to show that it is completely metrisable. Recall the following facts:
    \begin{enumerate}[label=\arabic*)]
        \item A subspace of a completely metrisable space is completely metrisable if and only if it is $G_\delta$;
        \item For a topological space $\XX$, the set of continuity points of a function $f:\XX\to\R$ is $G_\delta$;
        \item A closed subset of a metric space is $G_\delta$;
        \item The intersection of two $G_\delta$ sets is $G_\delta$.
    \end{enumerate}
     Recall the space $C^{0,1\text{-var}}\big([0,T],V\big)$ of absolutely continuous paths endowed with the $1$-variation topology; this space is completely metrisable with the set of piecewise linear paths forming a dense subset. We split the proof into two components:
    \begin{enumerate}[label=\arabic*)]
        \item The set of continuity points of the map $\gamma\mapsto\|\gamma^{\star}\|_{1\text{-var}}$ is the set of tree-reduced paths;
        \item The set of paths parameterised at constant speed is closed in $C^{0,1\text{-var}}\big([0,T],V\big)$.
    \end{enumerate}
    For the first item, suppose that $\gamma$ is tree-reduced with $\gamma_n\to \gamma$ in $1$-variation, implying $\norm{\gamma_n}_{1\text{-var}}\to\|\gamma\|_{1\text{-var}}$. Suppose for a contradiction that $\norm{\gamma_n^\star}_{1\text{-var}}$ does not converge to $\|\gamma\|_{1\text{-var}}$; then there exists a subsequence (which we also denote by $\gamma_n^\star$) for which $\norm{\gamma_n^\star}\to\kappa\geq 0 $ where $\kappa < \norm{\gamma^\star}_{1\text{-var}}$. Since the sequence $(\gamma_n^\star)_{n=1}^\infty$ is equicontinuous and bounded, it holds by Arzel\`a-Ascoli that there exists a uniformly convergent subsequence converging to a path $\tilde{\gamma}$ with finite $1$-variation, for which $\gamma_n^\star\to \tilde{\gamma}$ in $\WGp$ for some $p\in (1,2)$. From \cite[Lemma 1.15]{FV}, we have the bound
    \[
    \norm{\tilde{\gamma}}_{1\text{-var}}\leq \liminf_{n\to\infty} \norm{\gamma_n^\star}_{1\text{-var}} = \kappa.
    \]
    By continuity of the signature map $\SS:\WGp\to T((V))$ (with $T((V))$ equipped with the product topology), we have the following
    \[
    \SS(\gamma)=\lim_{n\to\infty}\SS(\gamma_n)=\lim_{n\to\infty}\SS(\gamma_n^\star)=\SS(\tilde{\gamma}).
    \]
    And so $\tilde{\gamma}\in[\gamma]$, implying that $\gamma$ does not have shortest length in its equivalence class: a contradiction. For the reverse, suppose that $\gamma\in C^{0,1\text{-var}}\big([0,T],G^{(1)}\big)$ is not tree-reduced. By density, we can approximate $\gamma$ in $1$-variation by a sequence $(\gamma_n)_{n=1}^\infty$ of piecewise linear paths. By replacing any re-traced pieces with suitable approximations via axis paths, this sequence can be assumed to be tree-reduced. It follows that
    \[
    \lim_{n\to\infty}\norm{\gamma_n^\star}_{1{\text{-var}}} = \lim_{n\to\infty}\norm{\gamma_n}_{1{\text{-var}}} = \norm{\gamma}_{1{\text{-var}}}>\norm{\gamma^\star}_{1{\text{-var}}},
    \]
    so that the map is discontinuous at any path which is not tree-reduced.

    For the second part, let $(\gamma_n)_{n=1}^\infty$ be collection of paths parameterised at constant speed, which we recall is defined by $\norm{\gamma}_{1{\text{-var}}; [0,t]}=\frac{t}{T}\norm{\gamma}_{1{\text{-var}}; [0,T]}$. Suppose that $\gamma_n\to \gamma$ in $C^{0,1\text{-var}}\big([0,T],V\big)$, so that $\norm{\gamma_n}_{1{\text{-var}}}\to \norm{\gamma}_{1{\text{-var}}}$. It must also hold that $\gamma_n\vert_{[0,t]}$ converges to $\gamma\vert_{[0,t]}$ in $C^{0,1\text{-var}}\big([0,t],V\big)$. We conclude by by taking the limits
    \[
    \norm{\gamma}_{1{\text{-var}}; [0,t]}=\lim_{n\to\infty}\norm{\gamma_n}_{1{\text{-var}}; [0,t]}=\lim_{n\to\infty}\tfrac{t}{T}\norm{\gamma_n}_{1{\text{-var}}; [0,T]}=\tfrac{t}{T}\norm{\gamma}_{1{\text{-var}}; [0,T]}.
    \]
\end{proof}
It is unclear how one might generalise the proof of \cref{prop: chi_d_polish} to $p>1$; the proof relies on a characterisation of the tree-reduced element of an equivalence class in terms of its length. As discussed earlier, however, the equivalent characterisation fails to be true when $p>1$, and it remains an open question as to whether some other characterisation exists.
\section{The Totality of Unparameterised Rough Paths}\label{sec: totality}
We conclude this note with a discussion on the totality of all unparameterised rough paths $\CC$, that is the quotient of the space
\[
WG\Omega(V)\coloneqq \bigcup_{p\geq 1}\WGp
\]
by tree-like equivalence. Here for $q>p$, two paths $\X^{\leq \floor{p}}\in\WGp$ and $ \Y^{\leq \floor{q}}\in WG\Omega_q(V)$ are called tree-like equivalent if $\X^{\leq \floor{q}}$ and $\Y^{\leq \floor{q}}$ are tree-like equivalent. It is unclear how to topologise $WG\Omega(V)$ in a consistent fashion, and so defining a suitable quotient topology on $\CC$ is not straightforward. However, it is still possible to define topologies on $\CC$ induced by metrics on subspaces of $G^{(\star)}$.
\begin{remark}
    We can canonically identify $\CC$ with the space $\mathscr{S}$ and $\CC_p$ with $\mathscr{S}_p$, so that we can view $\CC_p$ as a subset of $\CC$ through the inclusion $\mathscr{S}_p\subset \mathscr{S}$. This will prove useful in the proceeding.
\end{remark}
In a similar spirit to \cref{sec: results} we have the following theorem.
\begin{theorem}
    Let $\chi$ be any metrisable topology on $\CC$ for which $\SS:WG\Omega_p(V)\to (\CC,\chi)$ is continuous for all $p$, then the space $(\CC, \chi)$ is
    \begin{enumerate}[label=\arabic*)]
        \item Separable;
        \item $\sigma$-compact;
        \item Not locally compact;
        \item Not polish.
    \end{enumerate}
\end{theorem}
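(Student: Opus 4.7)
The plan is to adapt the arguments of \cref{cor: baire_space}, which treats the analogous properties for $(\CC_p, \chi_{\mathscr{m}})$, to the totality $\CC$ equipped with the topology $\chi$. The hypothesis that $\SS : WG\Omega_p(V) \to (\CC, \chi)$ is continuous for every $p \geq 1$ plays the role of \cref{ass: pq_cty}; the only novelty is handling several variation exponents simultaneously.

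For separability, any $[\X] \in \CC$ admits a representative $\X \in \WGp$ for some $p$, and by \cref{lem: pq_var} lies in $G\Omega_q(V)$ for every rational $q > p$. \Cref{lem: pw_density} then yields a sequence of piecewise linear paths with rational vertex data converging to $\X$ in $q$-variation, and continuity of $\SS$ into $(\CC, \chi)$ transfers this to density in $\chi$ of the (countable) set of equivalence classes of piecewise linear paths with rational data. For $\sigma$-compactness I would write $\CC = \bigcup_{q \in \mathbb{Q} \cap [1,\infty),\, r \in \mathbb{N}} B_q(r)$, where $B_q(r) := \{[\Y] : \|\Y^\star\|_{q\text{-var}} \leq r\}$; every equivalence class falls into such a ball once $q$ is chosen rationally above its regularity exponent. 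Compactness of each $B_q(r)$ in $\chi$ follows the proof of \cref{prop: compact_balls} verbatim: H\"older-control reparameterisations of the tree-reduced representatives form an equicontinuous family of uniformly $q$-variation bounded paths, so Arzel\`a-Ascoli plus interpolation yields a subsequence converging in $(q+\delta)$-variation, which continuity of $\SS$ from $WG\Omega_{q+\delta}(V)$ to $(\CC, \chi)$ upgrades to convergence in $\chi$, the limit remaining in $B_q(r)$ by lower semi-continuity.

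For (3) and (4) the strategy is to exhibit $\CC$ as a countable union of closed sets with empty interior, making $\CC$ meagre in itself and hence not a Baire space. Closedness of each $B_q(r)$ follows from compactness in the Hausdorff space $(\CC, \chi)$. For empty interior I would adapt \cref{lem: unbounded_balls}: given any non-empty open $U$, use the density from separability above to pick a piecewise linear $\gamma$ with $[\gamma] \in U$, and for each $n$ form the two-spike axis perturbation $\Z_{n,\varepsilon} = \gamma \star \gamma_{\varepsilon v_2} \star \gamma_{n v_1} \star \gamma_{-\varepsilon v_2} \star \gamma_{-n v_1}$. Continuity of $\SS$ from $WG\Omega_1(V)$ (in which every piecewise linear path sits) shows that for $\varepsilon_n$ small enough $[\Z_{n,\varepsilon_n}]$ lies in $U$, while the axis structure ensures $\|\Z_{n,\varepsilon_n}\|_{q\text{-var}} \to \infty$ for every $q \geq 1$. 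Once $\CC$ is known not to be Baire, the Baire category theorem rules out local compactness (locally compact Hausdorff spaces are Baire), giving (3), and rules out complete metrisability, hence Polishness, giving (4).

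The principal obstacle is purely the bookkeeping of variation exponents: each step calls for an auxiliary exponent just above that of whichever path is currently being manipulated, and one must check that continuity of $\SS$ on the correctly chosen $WG\Omega_{q'}(V)$ into $(\CC, \chi)$ feeds through. Once this coordination is organised, the proof becomes a mechanical translation of the single-exponent arguments of \cref{sec: results} into the multi-exponent setting.
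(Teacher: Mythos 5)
Your proposal is correct and follows essentially the same route as the paper: density of (lifts of) piecewise linear paths via $\WGp\subset G\Omega_q(V)$ and continuity of $\SS$, a countable cover of $\CC$ by compact variation-balls $B_q(r)$ (the paper indexes by integer $p$ rather than rational $q$, which is immaterial), emptiness of their interiors via the two-spike axis perturbation of \cref{lem: unbounded_balls}, and the Baire category theorem to rule out local compactness and complete metrisability. The only cosmetic difference is that the paper obtains the empty-interior step by observing that $\chi$ restricts on each $\CC_p$ to a topology satisfying \cref{ass: pq_cty} and then quoting \cref{lem: unbounded_balls} directly, where you re-run the construction; the content is identical.
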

\begin{proof}
   Recall that the (lift of the) same countable subset of piecewise linear paths is dense in each $\Gp$ for every $p\geq 1$, and that $\WGp\subset G\Omega_q(V)$ for all $q>p$. By continuity for each $p\geq 1$, the image of these paths under $\SS$ will then be dense in $\CC$.
   
   For the remaining points, we first claim that every open set in $\CC$ is unbounded in p-variation for some $p\geq 1$. First note that for every $p\geq 1$, the subspace topology on $\CC_p$ induced by the topology $\chi$ on $\mathscr{S}$ results in a topology on $\CC_p$ that satisfies \cref{ass: pq_cty}. Label this topology as $\chi_{\mathscr{m}}^p$. Let $U$ be a non-trivial open set in $\CC$ containing the signature of a weakly-geometric rough path $\X\in\CC_p$. Then $U\cap \CC_p\subseteq\CC_p$ will be a non-trivial open set in $(\CC_p,\chi_{\mathscr{m}}^p)$ and so unbounded in $p$-variation in the sense of \cref{eq: unbounded_pvar} by \cref{lem: unbounded_balls}. Let $B_p(r)$ be the set of equivalence classes whose tree-reduced representative may be viewed as an element of $\WGp$ with $p$-variation bounded by $r$. By the same argument as in \cref{prop: compact_balls}, each $B_p(r)$ will be compact in $(\CC,\chi)$. We may then write
   \[
   \CC = \bigcup_{p=1}^\infty\bigcup_{r=1}^\infty B_p(r).
   \]
   This is now a countable union of compact subsets of a Hausdorff space with empty interior, from which we deduce the remaining points by the Baire category theorem. 
\end{proof}
The set of group-like elements $G^{(\star)}$ is a closed and completely metrisable subset of $T((V))$ equipped with the product topology \cite[Lemma 2.6]{CT_topologies}, which itself is completely metrisable. Since $\mathscr{S}$ is contained in $ G^{(\star)}$ and is itself not completely metrisable, we obtain the following well known result.
\begin{corollary}
    The set of all signatures $\mathscr{S}$ is a strict subset of $G^{(\star)}$. If $G^{(\star)}$ is equipped with the product topology inherited from $T((V))$, then $\mathscr{S}$ is a meagre subset of $G^{(\star)}$.
\end{corollary}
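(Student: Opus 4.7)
The plan is to derive both statements from the preceding theorem, combined with Baire category on $G^{(\star)}$. Complete metrisability of $G^{(\star)}$ in the product topology makes it a Baire space, so once we know that $\mathscr{S}$ is meagre in $G^{(\star)}$ the strict inclusion is automatic. Hence I focus on meagreness.

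First I would take $\chi$ to be the subspace topology on $\mathscr{S} \equiv \CC$ inherited from the product topology on $T((V))$ restricted to $G^{(\star)}$. This $\chi$ is metrisable, and for every $p \geq 1$ the signature map $\SS : WG\Omega_p(V) \to (\CC, \chi)$ is continuous because convergence in the product topology is level-wise convergence and the Lyons lift is continuous from $\WGp$ into each truncated group. The hypotheses of the preceding theorem therefore apply, and its proof yields the decomposition
\[
\mathscr{S} \;=\; \bigcup_{p,\,r \in \mathbb{N}} B_p(r),
\]
in which each $B_p(r)$ is compact in $(\CC, \chi)$ and has empty interior there.

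Next I would transfer these two properties from $(\CC, \chi)$ to $G^{(\star)}$. Each $B_p(r)$ is a compact subset of the Hausdorff space $G^{(\star)}$ and is therefore closed in $G^{(\star)}$. The critical step is the empty-interior claim in the ambient space: if some non-empty open $U \subseteq G^{(\star)}$ were contained in $B_p(r)$, then since $U \subseteq B_p(r) \subseteq \mathscr{S}$ we would have $U = U \cap \mathscr{S}$, which is open in the subspace topology $\chi$, contradicting the empty interior of $B_p(r)$ in $(\CC, \chi)$. Hence $\mathscr{S}$ is a countable union of closed nowhere-dense subsets of $G^{(\star)}$, i.e.\ meagre, and strictness of the inclusion then follows from Baire's theorem applied to the completely metrisable space $G^{(\star)}$. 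The only real bookkeeping is to verify that the subspace topology $\chi$ satisfies \cref{ass: pq_cty} for every $p$, which is routine from the standard continuity of Lyons' lift into each truncated group.
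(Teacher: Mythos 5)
Your proposal is correct and follows essentially the same route as the paper: both rest on the decomposition $\mathscr{S}=\bigcup_{p,r}B_p(r)$ into compact sets with empty interior established in the preceding theorem, together with the complete metrisability of $G^{(\star)}$ and the Baire category theorem (the paper phrases the strictness step via the failure of complete metrisability of $\mathscr{S}$ rather than via meagreness, but this is the same underlying argument). If anything, you supply a detail the paper leaves implicit, namely that empty interior of $B_p(r)$ relative to the subspace $\mathscr{S}$ transfers to empty interior in the ambient space $G^{(\star)}$ because $B_p(r)$ is already closed there.
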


\section*{Funding and Acknowledgements}
TC has been supported by the EPSRC Programme Grant EP/S026347/1 and acknowledges the support of the Erik Ellentuck Fellowship at the Institute for Advanced Study. WFT has been supported by the EPSRC Centre for Doctoral Training in Mathematics of Random Systems: Analysis, Modelling and Simulation (EP/S023925/1). For the purpose of open access, the authors have applied a Creative Commons Attribution (CC BY) licence to any Author Accepted Manuscript version arising.

The authors thank Xi Geng for suggesting the topology discussed in \cref{rem: alternative_metric} and for suggesting the study of the totality of unparameterised rough paths.
\printbibliography[heading=bibintoc,title={References}]
\clearpage
\appendix

\end{document}